\def\NAT@def@citea{\def\@citea{\NAT@separator}}
\theoremstyle{plain}
\newtheorem{theorem}{Theorem}[section]
\newtheorem{lemma}[theorem]{Lemma}
\newtheorem{corollary}[theorem]{Corollary}
\newtheorem{proposition}[theorem]{Proposition}
\theoremstyle{definition}
\newtheorem{definition}[theorem]{Definition}
\newtheorem{example}[theorem]{Example}
\theoremstyle{remark}
\newtheorem{remark}{Remark}
\newtheorem{asumption}[theorem]{Assumption}
\begin{document}


\title{A Set-Valued Lagrange Theorem based on  a Process for Convex  Vector Programming}

\author{
\name{Fernando Garc\'ia-Casta\~no\textsuperscript{a}\thanks{CONTACT Fernando Garc\'ia-Casta\~no  Email: Fernando.gc@ua.es} and M.A. Melguizo Padial}
\affil{Department of Applied Mathematics, University of Alicante, Alicante, Spain; \textsuperscript{a}Orcid: 0000-0002-8352-8235}
}

\maketitle

\begin{abstract}
In this paper we present a new set-valued Lagrange multiplier theorem for constrained convex set-valued optimization problems. We introduce the novel concept of Lagrange process. This concept is a natural extension of the classical concept of Lagrange multiplier where the conventional notion of linear continuous operator is replaced by the concept of closed convex process, its set-valued analogue. The behaviour of this new Lagrange multiplier based on a process is shown to be particularly appropriate for some types of proper minimal points and, in general, when it has a bounded base.
\end{abstract}

\begin{keywords}
Lagrange multiplier; convex vector optimization; process; optimality conditions
\end{keywords}

\begin{amscode}
 90C29, 90C25, 90C31, 90C48
\end{amscode}

\section{Introduction and Main Results} 
The Lagrange multipliers theorem is the most classic theoretical instrument, and the first one from the historical point of view, to solve constrained optimization problems. 
In fact, Lagrange multipliers play a crucial role in the study of constrained optimization as they provide a natural connection between constrained and their corresponding unconstrained optimization problems. Originally, Lagrange formulated his rule for the optimization of a real-valued function under equality restrictions. 
Since then, numerous research works have generalized this approach. In particular, Lagrange multipliers have  been used in vector optimization  by different authors and from different points of view. For example, in \cite{GoJa99}, G\"otz and Jahn extended the Lagrange multiplier rule to set-valued constrained optimization problems  using the contingent epiderivative as differentiability notion. Later, in \cite{ZheKu06}, Zheng and Ng provided generalized Lagrange multiplier rules as necessary optimality conditions in constrained multiobjective optimization problems making use of coderivatives and normal cones. Coderivatives are also used in \cite{Huang2008} (together with Clarke's normal cones) in order to establish Lagrange multiplier rules for super efficiency optimality conditions of constrained multiobjective optimization problems. The bibliography on the matter is wide and we refer the reader to \cite{WanJey00,BalJim96,AmaTaa97,White1985,Frenk2007,Durea2010,Huang2001,Huy2012,GarciaMelguizo2015,GarciaCastano2015} for a sample of papers related to the subject. In this work, we investigate an abstract convex vector optimization problem with inequality constraints. For such a problem, we formulate a new set-valued Lagrange multiplier rule where the role traditionally played  by linear continuous operators is  played now by closed convex processes (their natural set-valued analogues, see \cite[Chapter~2]{aubin1990set}).  The novel concept of  Lagrange process introduced here is a natural extension of the classical concept of Lagrange multiplier and seems to be specially fitting for vector programming. Let us note that in vector programming the set of optimal points arises naturally as a non-singleton set and  the use of processes, set-valued maps after all, makes it easier to deal with. 
However, the idea of using set-valued maps as Lagrangian multipliers is not entirely new. For example, in  \cite{Tanino1980}  the authors defined a set-valued Lagrangian multiplier for a vector problem as the negative conjugate of a perturbation set-valued map. On the other hand, the use of set-valued maps as dual variables has also been carried out succefully with interesting results. For example, in \cite{Hamel2009,Hamel2011} a new concept of Legendre-Fenchel conjugate is developed giving rise to a new Fenchel-Rockafellar duality type. Some additional results following this enquiry line can be found in \cite{Hamel2014} and \cite[Chapter 4]{Bot2009}.



Next, we state a theorem which summarizes the main results obtained in the paper. It contains the above mentioned Lagrange multiplier rule based on processes and provides three optimality criteria. The undefined notions in the statement are the known standards commonly used in the related literature. In any case, the reader will find the corresponding definitions in the subsequent sections.


\begin{theorem}\label{Main_Theorem} 
Let $X$, $Y$, and $Z$ be normed spaces such that $Y$ and $Z$ are ordered  by  the corresponding cones $Y_+$ and $Z_+$, both having non-empty interior. Take a convex set $\Omega \subset X$,  maps $f:\Omega \rightarrow Y$ and $g:\Omega\rightarrow Z$ such that $f$ is $Y_+$-convex and  $g$ is $Z_+$-convex. 
Assume the existence of $x_1 \in \Omega$ such that $-g(x_1)$ belongs to the interior of $Z_+$.
Then for every $y_0=f(x_0)\in f(\Omega)$ such that $g(x_0)\leq 0$,  there exists a closed convex process  $L_{y_0}:Z\rightrightarrows Y$ such that if $y_0$ is a minimal point of the program
\[
\text{  Min \,} f(x) \text{ \  such that \ } x\in \Omega, \text{ \ } g(x) \leq 0, \text{ \ \ \ \ \ \ } (P(0))
\]
then $y_0$ is a weak minimal point of the (set-valued) program
\[
\text{  Min } f(x) + L_{y_0}(g(x)) \text{ such that } x\in \Omega. \hspace{3cm } (D_{L_{y_0}})  
\]
In addition, we have the following
\begin{itemize}
\item[(i)]  $y_0$ is a minimal point of $(P(0))$ if and only if $y_0$ is a minimal point of $(D_{L_{y_0}})$, provided either $Y_+ \setminus \left\lbrace 0 \right\rbrace $ is open or the cone Graph($L_{y_0}$) has a bounded base,
\item[(ii)] $y_0$ is a positive (resp. Hening global, Hening) minimal point of $(P(0))$ if and only if $y_0$ is a positive (resp. Hening global, Hening) minimal point of  $(D_{L_{y_0}})$,  provided $Y_+$ is pointed,
\item[(iii)]  $y_0$ is a super efficient point of $(P(0))$ if and only if $y_0$ is a super efficient minimal point of  $(D_{L_{y_0}})$, provided $Y_+$ has a bounded base.
\end{itemize}
\end{theorem}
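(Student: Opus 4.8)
The plan is to produce a single canonical process $L_{y_0}$ from the data of $(P(0))$ and then verify, notion by notion, that it transports the various optimality properties between $(P(0))$ and $(D_{L_{y_0}})$. First I would encode the problem geometrically in $Z\times Y$ by the convex set
\[
M:=\{(g(x),f(x)-y_0):x\in\Omega\}+\bigl(Z_+\times Y_+\bigr),
\]
whose convexity is immediate from the $Z_+$-convexity of $g$ and the $Y_+$-convexity of $f$. Minimality of $y_0$ for $(P(0))$ says precisely that no feasible $x$ beats $y_0$, which translates into the disjointness $M\cap\bigl((-Z_+)\times(-\mathrm{int}\,Y_+)\bigr)=\emptyset$. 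Here the hypothesis that \emph{both} $Y_+$ and $Z_+$ have non-empty interior is essential: it guarantees that $(-\mathrm{int}\,Z_+)\times(-\mathrm{int}\,Y_+)$ is a non-empty open convex cone, disjoint from the convex set $M$, so a Hahn--Banach separation applies.

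This separation furnishes a non-zero pair $(\varphi,\psi)\in Z^{*}\times Y^{*}$ with $\varphi\in Z_+^{*}$, $\psi\in Y_+^{*}$, and
\[
\langle\varphi,g(x)\rangle+\langle\psi,f(x)-y_0\rangle\ \ge\ 0\qquad(x\in\Omega).
\]
The Slater point $x_1$ with $-g(x_1)\in\mathrm{int}\,Z_+$ is what forces $\psi\neq0$ (ruling out the degenerate multiplier) and pins down the complementary-slackness normalisation $\langle\varphi,g(x_0)\rangle=0$. I would then simply \emph{define} the process by its graph,
\[
\mathrm{Graph}(L_{y_0}):=\bigl\{(z,y)\in Z\times Y:\ \langle\psi,y\rangle\ge\langle\varphi,z\rangle\bigr\},
\]
a closed half-space through the origin, hence a closed convex cone; thus $L_{y_0}$ is a closed convex process. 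Two checks then follow mechanically: $0\in L_{y_0}(g(x_0))$ because $\langle\varphi,g(x_0)\rangle\le0$ (giving attainability $y_0\in f(x_0)+L_{y_0}(g(x_0))$), and the key \emph{penalisation inequality}: if some $w\in L_{y_0}(g(x))$ had $f(x)+w-y_0\in-\mathrm{int}\,Y_+$, then strict positivity of $\psi$ on $\mathrm{int}\,Y_+$ together with $\langle\psi,w\rangle\ge\langle\varphi,g(x)\rangle$ would contradict the displayed separation inequality. This yields exactly the weak minimality of $y_0$ for $(D_{L_{y_0}})$.

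For the three equivalences the pull-back direction is the soft one: restricting a minimiser of $(D_{L_{y_0}})$ to feasible $x$ and using $0\in L_{y_0}(g(x_0))$ recovers the corresponding property for $(P(0))$. The push-forward direction is where the extra hypotheses enter, and I would treat each notion through its own scalarisation fed through the penalisation inequality. For (i), when $Y_+\setminus\{0\}$ is open one has weak minimality equal to minimality and nothing more is needed; otherwise a bounded base of $\mathrm{Graph}(L_{y_0})$ supplies a strictly positive functional controlling the cone, which promotes the weak conclusion to a strong one. For (ii), pointedness of $Y_+$ guarantees the existence of the dilating cones (Hening properness) or strictly positive functionals (positive properness) defining these proper points, and I would show that such a scalarisation survives the Lagrangian shift, using again complementary slackness. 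For (iii), the bounded base of $Y_+$ is exactly the hypothesis in the Borwein--Zhuang characterisation of super efficiency by a norm/neighbourhood estimate; since $L_{y_0}$ is positively homogeneous, the correction $L_{y_0}(g(x))$ is controlled in norm and the estimate is preserved in both directions.

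The main obstacle I anticipate is not the base statement --- once Slater is available the separation above is routine --- but arranging that \emph{one fixed} process $L_{y_0}$ simultaneously services all of (i)--(iii). The coarse half-space process proves weak minimality effortlessly, yet the proper-point transfers in (ii) and the quantitative super-efficiency transfer in (iii) seem to require a sharper, intrinsic process (for instance the one whose graph is the closed convex conical hull of the perturbed data), together with strictly positive rather than merely non-zero multipliers. Reconciling these --- producing a single closed convex process that carries complementary slackness, the penalisation inequality, \emph{and} the finer scalarisations demanded by super efficiency with a bounded base --- is, I expect, the technical heart of the argument.
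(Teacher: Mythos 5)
Your single-hyperplane construction is sound as far as it goes, and for the base statement it essentially reproduces the paper's argument: Proposition~\ref{PropLalfa} proves exactly your ``penalisation inequality'' claim (for each separating functional $h$, the point $y_0$ is weakly minimal for $(D_{L_h})$), and the soft pull-back direction of Theorem~\ref{TeoPrinc}~(ii) needs only $0\in L_{y_0}(g(x))$ for feasible $x$, as you say. The genuine gap is the one you yourself flag in your last paragraph and do not close: a single half-space process cannot service (i)--(iii). Concretely, $\mathrm{Graph}(L_{y_0})=\{(z,y):\langle\psi,y\rangle\ge\langle\varphi,z\rangle\}$ contains the kernel hyperplane of the functional $(-\varphi,\psi)$, hence contains lines, hence admits no base whatsoever; so the bounded-base alternative in (i) is unattainable for your process, and your plan for that branch (``a bounded base supplies a strictly positive functional'') can never be executed. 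For (ii), positive proper minimality of $y_0$ over $M=\{f(x)+L_{y_0}(g(x)):x\in\Omega\}$ requires a functional in $Y_+^{+i}$ bounded below on translates of the half-space $\{w:\langle\psi,w\rangle\ge c\}$, which forces it to be a nonnegative multiple of $\psi$; but separation only gives $\psi\in Y_+^{*}\setminus\{0\}$, not strict positivity, so the forward transfer genuinely fails for your process rather than merely being unproven. For (iii), the claim that ``$L_{y_0}(g(x))$ is controlled in norm'' by positive homogeneity is false: $L_{y_0}(z)$ is an unbounded half-space of $Y$ for every $z$. A smaller point: you only define $L_{y_0}$ when separation succeeds, whereas the theorem demands a process for \emph{every} feasible $y_0$, minimal or not.

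The paper closes precisely this gap by taking the \emph{intersection} over all separating functionals: it defines the set $\mathcal{S}_{Y_+}(y_0)$ of all hyperplanes separating $\mathrm{Graph}(W_{Y_+})$ from $(-Z_+)\times(y_0-Y_+)$, sets $\mathrm{Graph}(L_{y_0})=\bigcap_{h\in\mathcal{S}_{Y_+}(y_0)}\mathrm{Graph}(L_h)$ (and $Z\times Y$ when $\mathcal{S}_{Y_+}(y_0)=\emptyset$, covering non-minimal $y_0$). This intersection can be pointed with a bounded base, making (i) non-vacuous; Theorem~\ref{TNorm} proves that case by using the equivalence between a bounded base of $\mathrm{Graph}(L_{y_0})$ and $\mathrm{Int}\,(\mathrm{Graph}(L_{y_0}))^-\neq\emptyset$ to pick $\bar h$ in the interior of the negative polar, and then Lemma~\ref{LSeparatingHyperplane}~(iv) --- every supporting functional of $\mathrm{Graph}(W_{Y_+})$ at $(0,y_0)$ already belongs to $\mathcal{S}_{Y_+}(y_0)$ --- to convert it (after flipping the sign of its $Z$-component) into a new separating $h_0$ with $h_0(0,\hat y)<0$, the desired contradiction. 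Statements (ii) and (iii) are then obtained not by making a scalarisation ``survive the Lagrangian shift'' but by Theorem~\ref{thequiv}: each proper notion is ordinary minimality with respect to an auxiliary cone $Q\cup\{0\}$ with $Q$ open, so the open-cone equivalence of Corollary~\ref{CorPrinc} transfers it between $W(0)$ and $M$, and (iii) follows from the Henig case together with $\mathrm{SE}=\mathrm{He}$ when $Y_+$ has a bounded base. Repairing your sketch would in effect mean rebuilding this intersection construction and Lemma~\ref{LSeparatingHyperplane}~(iv), which are the actual technical heart you anticipated.
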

In order to make the paper easier to read, we have split Theorem \ref{Main_Theorem}  into several results stated and proved separately along the text. Namely, the first part  in Theorem~\ref{Main_Theorem} corresponds to  Theorem \ref{TeoPrinc} (i) --stated in Section \ref{Sec_Nec_Opt_Condition}--, Theorem \ref{Main_Theorem} (i) corresponds to Corollary \ref{CorPrinc}  and Theorem \ref{TNorm} (resp. for the first and the second condition at the very end in statement (i)) --stated resp. in Sections \ref{Sec_Nec_Opt_Condition} and \ref{Sec_Suf_Opt_Cond}-- , and finally Theorem \ref{Main_Theorem} ~(ii) and  (iii) correspond to Theorem \ref{ThPrEf} --stated in Section \ref{Sec_Suf_Opt_Cond}--.

The paper is organized as follows. 
Section \ref{Preliminaries and Notations} is devoted to recall some basic definitions and facts dealing with set-valued
maps and ordered vector spaces. 
Section \ref{The Set-valued Lagrange Process} is dedicated to the construction of the new Lagrange multiplier. In such a section, and after some technical results, Definition~\ref{LagrProc} introduces the novel concept of Lagrange process ($L_{y_0}$ in Theorem \ref{Main_Theorem}), and associated to it, Definition \ref{ProgramDL} sets its corresponding unconstrained optimization problem (($D_{L_{y_0}}$) in Theorem \ref{Main_Theorem}). 
After that, Definition~\ref{DefMultLag} introduces the notion of (set-valued) Lagrange multiplier. 
Section \ref{Sec_Nec_Opt_Condition} is devoted to derive some optimality conditions based on a generalized set-valued multiplier rule. Theorem \ref{TeoPrinc} (i) provides the necessary optimality condition claimed in the first part of Theorem~\ref{Main_Theorem}. Besides, Theorem~\ref{TeoPrinc} (ii) and Corollary \ref{CorPrinc} provide two sufficient optimality conditions.
Section \ref{Sec_Suf_Opt_Cond}  focuses on sufficient optimality conditions.  We discuss two approaches. The first one is Theorem \ref{TNorm}, which is based on geometrical aspects of the Lagrange process (it corresponds to Theorem \ref{Main_Theorem} (i)). The other approach is  Theorem \ref{ThPrEf}, which is specific for some types of proper efficiency (it corresponds to Theorem~\ref{Main_Theorem} (ii), (iii)). 
Finally, in Section \ref{Section_Case_Set_Valued} we adapt the results obtained in sections \ref{The Set-valued Lagrange Process}, \ref{Sec_Nec_Opt_Condition}, and \ref{Sec_Suf_Opt_Cond}  to a set-valued optimization problem. As a consequence, we increase the range of applicability of the approach made in this paper. This adaptation is straightforward, reason for which we omit the corresponding proofs.




\section{Preliminaries and Notations}\label{Preliminaries and Notations}

We begin this section recalling some basic definitions and facts dealing with set-valued
maps and ordered vector spaces that we will use throughout the paper. Let  ${{\rm I}\hskip-.13em{\rm R}}_+$ be the set of nonnegative real numbers and $Y$ a normed space. A nonempty convex subset $K\subset Y$ is called a cone if $\alpha K \subset K$, for all $\alpha \in {{\rm I}\hskip-.13em{\rm R}}_+$. Let $Z$ be another normed space, a set-valued map $L: Z \rightrightarrows Y$ is characterized by its graph, which is defined by Graph$(L):=\{(z,y)\in Z\times Y \colon y\in L(z)\}$.  According to \cite[Definition 2.1.1]{aubin1990set},  a set-valued map $L: Z \rightrightarrows Y$ is called a process (resp. linear process) if Graph$(L)$ is a cone (resp. a vector subspace) and it is said to be convex (resp. closed) if Graph$(L)$ is a convex (resp. closed) set. Let $Y_+\subset Y$ be a cone, the order on $Y$ given by $Y_+$ is defined as $y_1\leq y_2$ if and only if $y_2-y_1\in Y_+$  for all $ y_1$, $y_2 \in Y$; in such a case $Y$ is said to be an ordered normed space and $Y_+$ is called the order cone on $Y$. In this context a set-valued map $F: Z \rightrightarrows Y$ is said to be $Y_+$-convex if its epigraph, epi$(F):=\left\lbrace (z,y)\in Z \times Y : y \in F(z)+Y_+  \right\rbrace $, is convex. This definition also applies for conventional point-to-point maps just taking single-valued maps. We denote by Int~$Y_+$ the interior of $Y_+$ in $Y$. We write  $y_1< y_2$ if and only if $y_2-y_1\in$ Int~$Y_+$ for all $ y_1$, $y_2 \in Y$.
Let us fix now a set $A \subset Y$ and some $a\in A$. It is said that $a$ is a minimal point of $A$, written $a\in$ Min$(A)$, if $A  \cap (a-Y_+) \subset a+Y_+$. It is immediate to check that if $Y_+$ is pointed (i.e., $Y_+\cap (-Y_+)=\{0\}$), then $a\in$ Min$(A)$ if and only if $A  \cap (a-Y_+) =\{a\}$. In case Int $Y_+\not = \emptyset$, we say that  $a\in A$ is a weak minimal point of  $A$, written $a\in$ WMin$(A)$, if $A  \cap (a- \mbox{Int}\,Y_+) = \emptyset$. It is clear that Min$(A)\subset$ WMin$(A)$ and, if $Y_+ \setminus \{ 0 \}$ is open, then WMin$(A)=$ Min$(A)$.

Here and subsequently we consider the following. Normed spaces $X$, $Y$, and $Z$ such that $Y$ and $Z$ are ordered by the corresponding order cones $Y_+$ and $Z_+$. We assume that both cones have non-empty interior.  
We fix a convex set $\Omega \subset X$ and two maps $f:\Omega \rightarrow Y$ and $g:\Omega  \rightarrow Z$ such that $f$ is $Y_+$-convex (i.e., $\lambda f(u)+(1-\lambda)f(v)-f(\lambda u+(1-\lambda)v) \in Y_+$, $\forall \lambda \in [0,1]$, $u$, $v \in \Omega$) and  $g$ is $Z_+$-convex (i.e., $\lambda g(u)+(1-\lambda)g(v)-g(\lambda u+(1-\lambda)v) \in Z_+$, $\forall \lambda \in [0,1]$, $u$, $v \in \Omega$). Now, we consider the program
\[
\text{  Min \,} f(x) \text{ \  such that \ } x\in \Omega, \text{ \ } g(x) \leq z, \hspace{2cm}(P(z))
\]
for every $z \in Z$. In this framework, fixed $z\in Z$, we say that 
$x_z \in \left\lbrace x \in \Omega: g(x)\leq z \right\rbrace  $ 
is  a minimal solution of $(P(z))$ if $f(x_z)\in$ Min$(\left\lbrace f(x): x \in \Omega, \, g(x)\leq z \right\rbrace)$, in this case we say that $f(x_z)$ is a minimal point of $(P(z))$. Similarly, 
$x_z \in \left\lbrace x \in \Omega: g(x)\leq z \right\rbrace  $
is said to be a weak minimal solution of $(P(z))$ if
$f(x_z)\in$ WMin$(\left\lbrace f(x): x \in \Omega, \, g(x)\leq z \right\rbrace)$, in this case $f(x_z)$ is said to be a weak minimal point of $(P(z))$. 
Associated to the program $(P(z))$ we consider the following set-valued maps: $\Lambda : Z\rightrightarrows X$ defined by $\Lambda (z):=\{ x \colon  x \in \Omega,\ g(x) \leq z \}$ --the feasible set--, $ W: Z\rightrightarrows Y$ defined by $W (z):= f(\Lambda(z))$ --the image under $f$ of the feasible set--, and $ W_{Y_+}: Z\rightrightarrows Y$ defined by $W_{Y_+} (z):= W (z)  + Y_+ $ --the corresponding ``upper image'' of the feasible set--.

\section{The Set-valued Lagrange Process}\label{The Set-valued Lagrange Process}
The objective in this section is to introduce the novel  set-valued concept of Lagrange process ($L_{y_0}$ in Theorem~\ref{Main_Theorem}), the cornerstone of the paper.  As shown in Theorem~\ref{Main_Theorem}, this new Lagrange multiplier is a natural set-valued extension of the conventional concept of Lagrange multiplier for scalar convex programming. Before defining it, we will provide some necessary technical results. Here and subsequently we will use the following notation. Fixed $(z_0,y_0)\in Z\times Y$ and $\epsilon>0$, we write $B((z_0,y_0),\epsilon)=\{(z,y)\in Z \times Y \colon \parallel z-z_0\parallel_Z+\parallel y-y_0 \parallel_Y<\epsilon\}$, being $\parallel \cdot \parallel_Z$ the norm on $Z$ and  $\parallel \cdot \parallel_Y$ the norm on $Y$.

\begin{lemma}\label{LWConv} 
Let us fix $(z,y) \in Z\times Y$ and $\epsilon>0$. The following statements hold. 
\begin{itemize}
\item[(i)] If $B((z,y),\epsilon)\subset $ Graph $(W_{Y_+})$, then $B((z^*,y^*),\epsilon)\subset$ Graph$(W_{Y_+})$, $\forall (z^*,y^*)\in (z,y)+Z_+\times Y_+$. In particular, $(z,y)\in \mbox{Graph}(W_{Y_+})$ implies $(z,y)+Z_+\times Y_+ \subset \mbox{Graph}(W_{Y_+})$.
\item[(ii)] $\mbox{Graph}(W_{Y_+}) \subseteq Z \times Y$  is a convex set.
\end{itemize}
\end{lemma}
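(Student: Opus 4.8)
The plan is to reduce everything to one membership criterion and then read both parts off it. Unfolding the definitions, $(z,y)\in\mbox{Graph}(W_{Y_+})$ means $y\in W_{Y_+}(z)=f(\Lambda(z))+Y_+$ with $\Lambda(z)=\{x\in\Omega\colon g(x)\leq z\}$, which is equivalent to the existence of some $x\in\Omega$ with $g(x)\leq z$ and $f(x)\leq y$ (the last inequality since $y\in f(x)+Y_+$ says exactly $y-f(x)\in Y_+$). I would record this reformulation at the outset, because both assertions then follow almost mechanically.

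For part (i) I would prove the monotonicity statement (the ``In particular'' clause) first, since the general statement rests on it. Given $(z,y)\in\mbox{Graph}(W_{Y_+})$ witnessed by some $x$, and $(z^*,y^*)=(z,y)+(a,b)$ with $(a,b)\in Z_+\times Y_+$, the same $x$ works: $g(x)\leq z\leq z^*$ and $f(x)\leq y\leq y^*$ by transitivity of the orders, which in turn uses that $Z_+$ and $Y_+$ are convex cones (stable under addition). For the general ball statement I would use a translation trick: for any $(z'',y'')\in B((z^*,y^*),\epsilon)$, the translated point $(z''-a,\,y''-b)$ lies in $B((z,y),\epsilon)$, because the defining inequality $\|z''-z^*\|_Z+\|y''-y^*\|_Y<\epsilon$ is literally the inequality for $(z''-a,y''-b)$ centred at $(z,y)$. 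Hence $(z''-a,y''-b)\in\mbox{Graph}(W_{Y_+})$ by hypothesis, and adding back $(a,b)\in Z_+\times Y_+$ and invoking monotonicity gives $(z'',y'')\in\mbox{Graph}(W_{Y_+})$, so $B((z^*,y^*),\epsilon)\subset\mbox{Graph}(W_{Y_+})$.

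For part (ii) I would take $(z_1,y_1),(z_2,y_2)\in\mbox{Graph}(W_{Y_+})$, witnessed by $x_1,x_2\in\Omega$, and fix $\lambda\in[0,1]$. Setting $x_\lambda=\lambda x_1+(1-\lambda)x_2\in\Omega$ by convexity of $\Omega$, I would bound $g(x_\lambda)\leq\lambda g(x_1)+(1-\lambda)g(x_2)\leq\lambda z_1+(1-\lambda)z_2$, using the $Z_+$-convexity of $g$ for the first inequality and the fact that $Z_+$ is a convex cone (so the convex combination of $z_i-g(x_i)\in Z_+$ again lies in $Z_+$) for the second; the identical computation with $f$ and $Y_+$ yields $f(x_\lambda)\leq\lambda y_1+(1-\lambda)y_2$. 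Thus $x_\lambda$ witnesses that $\lambda(z_1,y_1)+(1-\lambda)(z_2,y_2)\in\mbox{Graph}(W_{Y_+})$, which is convexity.

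The arguments are essentially bookkeeping, and I do not anticipate a genuine obstacle. The only point demanding care is keeping the two orders $\leq$ on $Z$ and on $Y$ cleanly separated and repeatedly invoking that $Y_+$ and $Z_+$ are convex cones, so that the orders are transitive and compatible with convex combinations; the convexity-plus-cone structure is exactly what makes the ``witness'' $x$ (or $x_\lambda$) propagate through each estimate.
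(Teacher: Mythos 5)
Your proof is correct and takes essentially the same approach as the paper: the paper opens with the same witness characterization, $\mathrm{Graph}(W_{Y_+})=\{(g(x)+z_+,\,f(x)+y_+): x\in\Omega,\ z_+\in Z_+,\ y_+\in Y_+\}$, then declares statement (i) straightforward (your monotonicity-plus-translation argument is exactly the omitted step) and obtains (ii) by citing Lemma~8.1.15 of Khan--Tammer--Z\u{a}linescu, whose content is precisely your convex-combination computation with the witness $x_\lambda$. Your write-up is thus a self-contained expansion of the paper's largely omitted proof, with no gaps.
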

\begin{proof}
It is easily seen that
\[
Graph(W_{Y_+})=\left\lbrace (g(x)+z_+,f(x)+y_+): x \in \Omega, y_+ \in Y_+, z_+ \in Z_+  \right\rbrace.
\]
Now, statement (i) is straightforward and so we omit the proof.  Regarding (ii), \cite[Lemma 8.1.15]{Zalinescu2015} yields  directly the convexity of $Graph(W_{Y_+})$. 
\end{proof}
Let us recall (from the former section) that $\Lambda (0)=\{x \in \Omega \colon  g(x) \leq 0 \}$ and $W (0)= f(\Lambda(0))$.

\begin{definition}
For every $y_0 \in W (0)$,  we define
\begin{equation}\label{defi_S_Y}
\begin{array}{lll}
\mathcal{S}_{Y_+}(y_0) & := & \left\lbrace   h \in (Z \times Y)^* \setminus \lbrace (0,0)\rbrace : h(z', y') \leq h(0, y_0) \leq h(z,y),\right.  \\
           &  & \left. \forall (z', y') \in (-Z_+)\times (y_0-Y_+), \forall (z,y)\in Graph(W_{Y_+}) \right\rbrace.
\end{array}
\end{equation}
We say that every $h\in \mathcal{S}_{Y_+}(y_0)$ separates the sets $Graph(W_{Y_+})$ and  $(-Z_+)\times (y_0-Y_+)$ at $(0,y_0)\in \{0\}\times W(0) \subset Z\times Y$. Note that the set $\mathcal{S}_{Y_+}(y_0)$ may be empty.
\end{definition}

\noindent The following lemma introduces several useful properties of the set of  separating hyperplanes just defined above. Such a lemma may be summarized by saying the following. First, $\mathcal{S}_{Y_+}(y_0)\not = \emptyset$ provided  $y_0$ is a minimal point of $(P(0))$. Furthermore, in case $\mathcal{S}_{Y_+}(y_0)\not = \emptyset$, it contains no ``vertical" hyperplanes under the Slater constraint qualification and it coincides with the set of all the supporting hyperplanes of Graph$(W_{Y_+})$ at  $(0,y_0)$.

\begin{lemma}\label{LSeparatingHyperplane} 
Fix $y_0 \in W (0)$, the following statements hold true.
\begin{itemize}
\item[(i)] If $y_0$ is a minimal point of $(P(0))$, then the set $\mathcal{S}_{Y_+}(y_0)$ is not empty.
\item[(ii)] For every $h \in \mathcal{S}_{Y_+}(y_0)$, there exist $z^*_h \in Z^*$ and  $y^*_h \in Y^*$ such that $h(z,y)=z^*_h(z)+y^*_h(y)$, $\forall (z,y) \in Z\times Y$, and furthermore
\begin{eqnarray}
\langle z^*_h , z_+ \rangle =h(z_+,0)\geq 0,\,\forall z_+\in Z_+;\label{Primera_propiedad_h}\\
\langle y^*_h , y_+ \rangle =h(0,y_+)\geq 0,\, \forall y_+\in Y_+. \label{Segunda_propiedad_h}
\end{eqnarray}
\item[(iii)] If there exists $x_1 \in \Omega$ such that $g(x_1)<0$, then $y^*_h \not = 0$ for all $h \in \mathcal{S}_{Y_+}(y_0)$. \label{LSlater} 

\item[(iv)] Suppose $\mathcal{S}_{Y_+}(y_0) \not=\emptyset$. Then we have 
\begin{gather*}
\mathcal{S}_{Y_+}(y_0) =\left\lbrace   h \in (Z \times Y)^* \setminus \lbrace (0,0) \rbrace : h(0, y_0) \leq h(z,y),\, \forall (z,y)\in Graph(W_{Y_+}) \right\rbrace 
\end{gather*}

\end{itemize}
\end{lemma}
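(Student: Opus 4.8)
\emph{Plan.} The strategy is to obtain (i) from a single separation argument and then to deduce (ii)--(iv) by feeding suitable points into the two inequalities that define $\mathcal{S}_{Y_+}(y_0)$.

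For (i), put $A:=\mbox{Graph}(W_{Y_+})$, which is convex by Lemma~\ref{LWConv}(ii), and $B:=(-Z_+)\times(y_0-Y_+)$, convex as a product of convex sets. First I would observe that $(0,y_0)\in A\cap B$: writing $y_0=f(x_0)$ with $x_0\in\Omega$ and $g(x_0)\leq 0$, the explicit description of $\mbox{Graph}(W_{Y_+})$ from the proof of Lemma~\ref{LWConv} gives $(0,y_0)=\big(g(x_0)+(-g(x_0)),\,f(x_0)+0\big)\in A$, while $(0,y_0)\in B$ is clear. The core of the argument, and the step I expect to be the main obstacle, is to use the minimality of $y_0$ to show that $A$ misses the interior of $B$, i.e.
\[
A\cap\big[(-\mbox{Int}\,Z_+)\times(y_0-\mbox{Int}\,Y_+)\big]=\emptyset .
\]
A point in this intersection would arise from some $x\in\Omega$ with $g(x)\leq z<0$ and $f(x)\leq y<y_0$; the first chain forces $x\in\Lambda(0)$ and hence $f(x)\in W(0)$, and the second gives $y_0-f(x)\in\mbox{Int}\,Y_+$, so minimality ($W(0)\cap(y_0-Y_+)\subseteq y_0+Y_+$) would yield $f(x)-y_0\in Y_+$ and therefore $0=(y_0-f(x))+(f(x)-y_0)\in\mbox{Int}\,Y_+$, forcing $Y_+=Y$, which is excluded. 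Since both cones have non-empty interior, $\mbox{Int}\,B$ is a non-empty open convex set disjoint from $A$, so the Hahn--Banach separation theorem provides $h\in(Z\times Y)^*\setminus\{(0,0)\}$ and a constant $c$ with $h\geq c$ on $A$ and $h\leq c$ on $\mbox{Int}\,B$, hence on $B=\overline{\mbox{Int}\,B}$ by continuity. Evaluating at $(0,y_0)\in A\cap B$ gives $c=h(0,y_0)$, which is exactly the pair of inequalities defining membership in $\mathcal{S}_{Y_+}(y_0)$; thus $\mathcal{S}_{Y_+}(y_0)\neq\emptyset$.

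For (ii) and (iii) I would first use $(Z\times Y)^*=Z^*\times Y^*$ to write $h(z,y)=z^*_h(z)+y^*_h(y)$ with $z^*_h:=h(\cdot,0)$ and $y^*_h:=h(0,\cdot)$. The positivity relations then come out of the left inequality $h(z',y')\leq h(0,y_0)$: substituting $(z',y')=(-z_+,y_0)$ with $z_+\in Z_+$ and cancelling $y^*_h(y_0)$ yields (\ref{Primera_propiedad_h}), and substituting $(z',y')=(0,y_0-y_+)$ with $y_+\in Y_+$ yields (\ref{Segunda_propiedad_h}). For (iii), I argue by contradiction: if $y^*_h=0$ for some $h\in\mathcal{S}_{Y_+}(y_0)$, then $h=z^*_h\neq 0$ and $h(0,y_0)=0$, so the right inequality reads $z^*_h(z)\geq 0$ on $\mbox{Graph}(W_{Y_+})$ and in particular $z^*_h(g(x_1))\geq 0$, as $(g(x_1),f(x_1))\in\mbox{Graph}(W_{Y_+})$. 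But by (ii) $z^*_h$ is a non-zero functional non-negative on $Z_+$, and such a functional is strictly positive on $\mbox{Int}\,Z_+$ (vanishing at an interior point would force it to vanish on a ball, hence identically); since $-g(x_1)\in\mbox{Int}\,Z_+$ this gives $z^*_h(g(x_1))<0$, a contradiction, so $y^*_h\neq 0$.

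For (iv) the inclusion $\subseteq$ is immediate from the definition. For $\supseteq$, I would take $h\neq(0,0)$ with $h(0,y_0)\leq h(z,y)$ for all $(z,y)\in\mbox{Graph}(W_{Y_+})$. Because $(0,y_0)\in\mbox{Graph}(W_{Y_+})$, Lemma~\ref{LWConv}(i) puts every point $(z_+,y_0+y_+)$ with $z_+\in Z_+$, $y_+\in Y_+$ in $\mbox{Graph}(W_{Y_+})$; substituting these and cancelling $h(0,y_0)$ gives $z^*_h(z_+)+y^*_h(y_+)\geq 0$, whence $z^*_h\geq 0$ on $Z_+$ and $y^*_h\geq 0$ on $Y_+$ upon setting one argument to $0$. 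Then for any $(z',y')=(-z_+,y_0-y_+)\in(-Z_+)\times(y_0-Y_+)$ one has $h(z',y')=h(0,y_0)-z^*_h(z_+)-y^*_h(y_+)\leq h(0,y_0)$, the missing left inequality, so $h\in\mathcal{S}_{Y_+}(y_0)$. (The standing hypothesis $\mathcal{S}_{Y_+}(y_0)\neq\emptyset$ is not strictly needed for this identity; it only guarantees that the set being described is non-trivial, consisting precisely of the supporting functionals of $\mbox{Graph}(W_{Y_+})$ at $(0,y_0)$.)
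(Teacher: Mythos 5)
Your proof is correct, and parts (ii)--(iv) essentially coincide with the paper's; the one genuine (if mild) difference is in (i), where you swap which set supplies the interior for the separation. The paper uses minimality to show that $(-Z_+)\times(y_0-Y_+)$ contains no \emph{interior point of} $\mbox{Graph}(W_{Y_+})$ --- whose interior is nonempty via Lemma~\ref{LWConv}(i) together with $\mbox{Int}\,Z_+\times\mbox{Int}\,Y_+\neq\emptyset$ --- and then invokes Eidelheit's theorem; you instead show that $\mbox{Graph}(W_{Y_+})$ misses $\mbox{Int}\,B=(-\mbox{Int}\,Z_+)\times(y_0-\mbox{Int}\,Y_+)$, which is nonempty for free, separate the open convex set $\mbox{Int}\,B$ from the convex set $\mbox{Graph}(W_{Y_+})$, and recover the inequality on all of $B$. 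Your route buys a slightly shorter argument (no need to establish $\mbox{Int}(\mbox{Graph}(W_{Y_+}))\neq\emptyset$), at the cost of one small imprecision: since the order cones are not assumed closed, $B$ need not be closed, so you are only entitled to $B\subseteq\overline{\mbox{Int}\,B}$ rather than the equality $B=\overline{\mbox{Int}\,B}$ you wrote --- but the inclusion is all you actually use, so nothing breaks. Both uses of minimality bottom out in the same computation (a feasible $x$ with $f(x)<y_0$ combined with minimality forces $0\in\mbox{Int}\,Y_+$, i.e.\ $Y_+=Y$); the paper leaves this nondegeneracy implicit in its phrase ``a contradiction,'' whereas you state it explicitly. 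In (ii) you test the \emph{left} inequality of the definition with $(-z_+,y_0)$ and $(0,y_0-y_+)$, while the paper tests the right one with $(z_+,y_0)\in\mbox{Graph}(W_{Y_+})$ obtained from Lemma~\ref{LWConv}(i) --- equivalent one-liners. Your (iii) and (iv) match the paper's arguments step for step, and your closing remark is accurate and slightly sharper than the paper: the standing hypothesis $\mathcal{S}_{Y_+}(y_0)\neq\emptyset$ in (iv) is never used, since the inclusion $\supseteq$ only needs $(0,y_0)\in\mbox{Graph}(W_{Y_+})$, which holds simply because $y_0\in W(0)$.
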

\begin{proof}
(i) Since $y_0$ is a minimal point of $(P(0))$, $(0,y_0)\in$ Graph$(W_{Y_+})$. Applying Lemma \ref{LWConv} (i), we have $Z_+\times(y_0+Y_+) \subset$ Graph$(W_{Y_+})$. Since such cones have non-empty interior, Int(Graph$(W_{Y_+})$)$\not = \emptyset$. Furthermore,  $(-Z_+)\times (y_0-Y_+)$ contains no interior points of the set Graph$(W_{Y_+})$. On the contrary, suppose that $(-z_+,y_0-y_+)\in$ Int(Graph$(W_{Y_+})$) for some $(z_+,y_+)\in Z_+\times Y_+$. By Lemma \ref{LWConv} (i), $(0,y_0)\in$ Int(Graph$(W_{Y_+})$), contrary to the minimality of $y_0$. Indeed, if there exists some $\epsilon>0$ such that $B((0,y_0),\epsilon)\subset \mbox{Graph}(W_{Y_+})$, we can choose some $u \in Y_+$ in the unit sphere of $Y$ such that $(0,y_0-\frac{\epsilon}{2}u) \in \mbox{Graph}(W_{Y_+})$. Hence, there exists $x\in \Lambda (0)$ such that $y_0-\frac{\epsilon}{2}u \geq f(x)$, which implies $f(x)<y_0$, a contradiction. Now, Eidelheit separation theorem \cite[Theorem 3 p. 133]{Luenberger1969} provides a hyperplane which  separates the sets Graph$(W_{Y_+})$  and $(-Z_+)\times (y_0-Y_+)$.

(ii)  Fix $h \in \mathcal{S}_{Y_+}(y_0)$. Clearly we can pick $z^*_h \in Z^*$,  $y^*_h \in Y^*$ such that $h(z,y)=z^*_h(z)+y^*_h(y)$, $\forall (z,y) \in Z\times Y$. Thus $\langle z^*_h , z_+  \rangle  =h(z_+,0)$ and $\langle y^*_h , y_+  \rangle  =h(0,y_+)$ for all $ z_+\in Z_+$ and $ y_+\in Y_+$. By Lemma \ref{LWConv} (i), $(z_+,y_0) \in \text{Graph} (W_{Y_+})$ for all $z_+\in Z_+$,  and then $h(0,y_0) \leq h(z_+,y_0)$. Consequently, $h(z_+,0)= h(z_+,y_0) - h(0,y_0)\geq 0$ which gives (\ref{Primera_propiedad_h}). The proof for (\ref{Segunda_propiedad_h}) is similar.


(iii) Fix $h\in \mathcal{S}_{Y_+}(y_0)$ and suppose $y^*_h = 0$.  Since $h \not =0$, we have $ z^*_h \not = 0$. Therefore $z^*_h(z)>0$ for every $z \in$ Int$\,Z_+$. On the other hand, since $(g(x_1),f(x_1)) \in \mbox{Graph}( W_{Y_+})$, we have  $z^*_h(0)+y^*_h(y_0)=h(0,y_0)\leq h(g(x_1),f(x_1))=z^*_h(g(x_1))+y^*_h(f(x_1))$. Thus $0 \leq z^*_h(g(x_1))$, a contradiction because $g(x_1) \in$ $-$Int$\,Z_+$.    

(iv) The inclusion ``$\subseteq$'' is immediate. Let us check ``$\supseteq$''. Fix an arbitrary $\bar{h}  \in (Z \times Y)^* \setminus \lbrace (0,0)\rbrace$ such that $\bar{h}(0, y_0) \leq \bar{h}(z,y)$ for all $(z,y)\in \mbox{Graph}(W_{Y_+})$. We will check that $\bar{h}(-z_+,y_0-y_+)\leq \bar{h}(0,y_0)$ for any $z_+ \in Z_+ $ and $y_+ \in Y_+$. 
Indeed, since $\bar{h}(z_+,y_0+y_+)\in \mbox{Graph}(W_{Y_+})$ because of Lemma  \ref{LWConv} (i), then $\bar{h}(0, y_0) \leq \bar{h}(z_+,y_0+y_+)$ for any $z_+ \in Z_+ $ and $y_+ \in Y_+$. Therefore $0 \leq \bar{h}(z_+,y_+)$, and consequently $0 \geq\bar{h}(-z_+,-y_+)$, from which it follows that $\bar{h}(0,y_0)\geq \bar{h}(-z_+,y_0-y_+) $.

\end{proof}

\noindent In order to guarantee that $y^*_h \not = 0 $ for all $h \in \mathcal{S}_{Y_+}(y_0)$, from now on we assume  the Slater constraint qualification as true.

\begin{asumption}\label{Hip_Slater}[Slater constraint qualification]
From now on we assume that there exists $x_1 \in\Omega$ such that $g(x_1)<0$.
\end{asumption}

\begin{definition}  \label{DL} 
Let us fix $y_0 \in W(0)$. We define, for every $h \in  \mathcal{S}_{Y_+}(y_0)$, the following two processes
\begin{itemize}
\item[] $\bar{L}_h:Z\rightrightarrows Y, \ \bar{L}_h(z):= \left\lbrace  y\in Y: h (z,-y)=0\right\rbrace$,  for all $ z \in Z$,
\item[] $ L_h:Z\rightrightarrows Y, \ L_h(z):= \bar{L}_h(z) + Y_+$, for all $ z \in Z$.
\end{itemize}
\end{definition}
The next lemma provides three necessary technical properties in order to introduce, subsequently, the notion of Lagrange process properly.
\begin{lemma} \label{LGrafLalfa} 
Fix $y_0 \in W(0)$ and suppose  $\mathcal{S}_{Y_+}(y_0) \not= \emptyset $. The following statements hold.
\begin{itemize}
\item[(i)] Graph$(L_h)=\left\lbrace (z,y) \in Z \times Y : h(z,-y)\leq 0 \right\rbrace$, for all  $ h \in S_{Y_+}(y_0)$.
\item[(ii)] If $(\bar{z},\bar{y})\in \mbox{Graph}(W_{Y_+})$, then $(-\bar{z},\bar{y}-y_0)\in \mbox{Graph}(L_h)$, for all  $ h \in S_{Y_+}(y_0)$.
\item[(iii)] $\bigcap_{h \in  \mathcal{S}_{Y_+}(y_0)}{Graph(L_h)}\not = \{ (0,0) \}$. 
\end{itemize}

\end{lemma}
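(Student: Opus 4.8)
The plan is to reduce every statement to the linear representation $h(z,y) = z^*_h(z) + y^*_h(y)$ supplied by Lemma~\ref{LSeparatingHyperplane}(ii), together with the sign conditions (\ref{Primera_propiedad_h}), (\ref{Segunda_propiedad_h}) and the fact that $y^*_h \neq 0$, which holds by Lemma~\ref{LSeparatingHyperplane}(iii) under Assumption~\ref{Hip_Slater}. For (i) I would prove both inclusions. If $y \in L_h(z)$, write $y = \bar y + y_+$ with $\bar y \in \bar L_h(z)$ and $y_+ \in Y_+$; then $h(z,-y) = h(z,-\bar y) - \langle y^*_h, y_+ \rangle = -\langle y^*_h, y_+ \rangle \leq 0$ by (\ref{Segunda_propiedad_h}), which gives ``$\subseteq$''. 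For ``$\supseteq$'', assume $h(z,-y) \leq 0$, i.e. $\langle y^*_h, y \rangle \geq \langle z^*_h, z \rangle$. Fixing any $w \in \mathrm{Int}\,Y_+$ one checks that $\langle y^*_h, w \rangle > 0$ (otherwise $y^*_h$ would vanish on a ball around $w$ and hence be zero, contradicting Lemma~\ref{LSeparatingHyperplane}(iii)); then $y_+ := \frac{\langle y^*_h, y \rangle - \langle z^*_h, z \rangle}{\langle y^*_h, w \rangle}\, w$ lies in $Y_+$ and satisfies $h(z, -(y-y_+)) = 0$, so that $y - y_+ \in \bar L_h(z)$ and $y = (y - y_+) + y_+ \in L_h(z)$.

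Part (ii) then follows at once from the characterization just proved: by (i), $(-\bar z, \bar y - y_0) \in \mathrm{Graph}(L_h)$ is equivalent to $h(-\bar z, y_0 - \bar y) \leq 0$, and expanding linearly gives $h(-\bar z, y_0 - \bar y) = h(0,y_0) - h(\bar z, \bar y)$, which is $\leq 0$ because $h$ supports $\mathrm{Graph}(W_{Y_+})$ at $(0,y_0)$; the inequality $h(0,y_0) \leq h(\bar z, \bar y)$ for $(\bar z, \bar y) \in \mathrm{Graph}(W_{Y_+})$ is exactly the defining property of $\mathcal S_{Y_+}(y_0)$, cf. Lemma~\ref{LSeparatingHyperplane}(iv).

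For (iii) I would simply exhibit a nonzero common element. Since $\mathrm{Int}\,Y_+ \neq \emptyset$, the cone $Y_+$ is nontrivial, so one may pick $y_+ \in Y_+ \setminus \{0\}$. For every $h \in \mathcal S_{Y_+}(y_0)$ we have $h(0,-y_+) = -\langle y^*_h, y_+ \rangle \leq 0$ by (\ref{Segunda_propiedad_h}), hence $(0,y_+) \in \mathrm{Graph}(L_h)$ by (i); thus $(0,y_+)$ lies in $\bigcap_{h} \mathrm{Graph}(L_h)$ and is distinct from $(0,0)$. The only genuinely delicate step is the reverse inclusion in (i): it is precisely there that the Slater qualification (through $y^*_h \neq 0$) and the nonempty interior of $Y_+$ become indispensable, as they are what permit constructing an element of $Y_+$ with a prescribed positive value of $y^*_h$; the remaining manipulations are routine linear computations.
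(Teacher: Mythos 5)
Your proof is correct and takes essentially the same route as the paper's: the forward inclusion in (i) is identical, and your explicit coefficient $\bigl(\langle y^*_h,y\rangle-\langle z^*_h,z\rangle\bigr)/\langle y^*_h,w\rangle$ is exactly the scalar $\lambda$ the paper produces by decomposing $(z,y)$ along $\mathrm{Ker}\,h'$ plus a direction $(0,y_+)$ with $y_+\in \mathrm{Int}\,Y_+$ not annihilated by $h'$, both hinging on $y^*_h\neq 0$ from the Slater condition. Parts (ii) and (iii) also match: the paper derives (iii) from (ii), which applied to $(\bar z,\bar y)=(0,y_0+y_+)\in \mathrm{Graph}(W_{Y_+})$ yields precisely your witness $(0,y_+)$, so the two arguments coincide up to presentation.
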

\begin{proof}
(i) Let us prove the equality by double inclusion. If $(z,y)\in$ Graph$(L_h)$, then $y=\bar{y}+y_+$ with $h(z,-\bar{y})=0$ and $y_+ \in Y_+$. Hence, by Lemma \ref{LSeparatingHyperplane} (ii) $h(z,-y)=h(z,-\bar{y})+h(0,-y_+)\leq 0$. Now, let us check the reverse inclusion. Fix $(z_0,y_0) \in Z \times Y$ such that $h(z_0,-y_0) \leq 0$. We define $h'(z,y):= h(z,-y)$ for every $(z,y) \in Z \times Y $. We claim that $\{ 0 \} \times \text{Int} \, Y_+\nsubseteq \text{Ker } h'$. Otherwise, $y^*_h = 0 $, which contradicts Lemma~\ref{LSlater}~(iii). Let us pick some $(0,y_+) \in \{0\}\times \text{Int } Y_+ \setminus \text{Ker } h'$. We decompose $(z_0,y_0)=(z_0, \bar{y}_0) + \lambda (0,y_+)$, for some $\lambda \in \mathbb{R}$ and $(z_0, \bar{y}_0) \in \text{Ker } h'$. Then, on the one hand, $h(z_0,-\bar{y}_0)=h'(z_0,\bar{y}_0)=0$, or equivalently, $\bar{y}_0\in \bar{L}_h(z_0)$. On the other hand, we have
$ 0\geq h'(z_0,y_0)= h'(\bar{z}_0, \bar{y}_0) + \lambda h'(0, y_+)=\lambda  h(0, -y_+)$. Since
$h(0, -y_+)\leq 0 $ by Lemma \ref{LSeparatingHyperplane} (ii), we have $\lambda \geq 0 $. Then $y_0=\bar{y}_0+\lambda y_+ \in \bar{L}_h(z_0)+Y_+=L_h(z_0)$.

(ii) Fix $(\bar{z}, \bar{y}) \in \mbox{Graph}(W_{Y_+})$ and an arbitrary $h \in S_{Y_+}(y_0)$. Since $h(0,y_0)\leq h(\bar{z}, \bar{y})$, $h(\bar{z}, \bar{y}-y_0)\geq 0 $. Now statement (i) yields $(-\bar{z}, \bar{y}-y_0) \in Graph(L_h)$.

(iii) Direct consequence of statement (ii). 
\end{proof}

\begin{definition} \label{LagrProc}
Fix $y_0\in W(0)$. We define the Lagrange process of $(P(0))$ at $y_0$ as
the closed convex process  $L_{y_0}:Z\rightrightarrows Y $  such that
\begin{equation*}
Graph(L_{y_0}):=\left\lbrace 
\begin{array}{lll}
\displaystyle \bigcap_{h \in S_{Y_+}(y_0)}{Graph(L_h)}, & \text{ if } & \mathcal{S}_{Y_+}(y_0) \not= \emptyset, \\
& & \\
\displaystyle Z \times Y, & \text{ if } & \mathcal{S}_{Y_+}(y_0) = \emptyset.
\end{array}
\right. 
\end{equation*}
\end{definition} 
\begin{remark}
Let us observe that the Lagrange process $L_{y_0}$ set in Definition \ref{LagrProc}  is a closed convex process, since  Graph$(L_{y_0})$ is the intersection of closed halfspaces according to Lemma \ref{LGrafLalfa} (i).
\end{remark}
\begin{definition}\label{ProgramDL} 
Given $L:Z\rightrightarrows Y$ a closed convex process, we define the program 
\[
\text{  Min } f(x) + L(g(x)) \text{ such that } x\in \Omega. \hspace{2cm } (D_L)  
\]
Besides, we say that $x_0\in \Omega$  is:
\begin{itemize}
\item[(i)] A minimal solution of $(D_L)$ if $f(x_0)\in$ Min$(\left\lbrace f(x) + L(g(x)): x \in \Omega \right\rbrace)$, in this case we say that $f(x_0)$ is a minimal point of $(D_L)$. 
\item[(ii)] A weak minimal solution of $(D_L)$ if
$f(x_0)\in$ WMin$(\left\lbrace f(x) + L(g(x)): x \in \Omega \right\rbrace)$, in this case we say that $f(x_0)$ is a weak minimal point of $(D_L)$. 
\end{itemize}
\end{definition}

\begin{definition}\label{DefMultLag} 
Fix $y_0\in W(0)$. We say that  a closed convex process $L:Z\rightrightarrows Y$ is a  Lagrange
multiplier (resp. weak Lagrange multiplier) of ($P(0)$) at  $y_0$ if $y_0$ is a minimal (resp. weak minimal) point of the corresponding program ($D_L$).
\end{definition}

\section{Necessary Optimality Condition}\label{Sec_Nec_Opt_Condition}
In this section we turn back to Theorem \ref{Main_Theorem} in the introduction. Our aim here is to state and prove the first statement of such a result and the first part of the corresponding assertion (i).  The above mentioned parts of Theorem \ref{Main_Theorem} corresponds --as was said in the introduction--  to Theorem \ref{TeoPrinc} (i) and Corollary  \ref{CorPrinc} in this section, respectively. On one hand, Theorem~\ref{TeoPrinc}~(i) provides a  necessary  optimality condition for the minimal points of program ($P(0)$) which, unfortunately, is  not sufficient (as Example \ref{Example} shows later). In order to relieve this gap, Theorem \ref{TeoPrinc} (ii) gives a first  sufficient condition of optimality that supplies  some additional information about the relationship between the minimal points of programs ($P(0)$) and $(D_{L_{y_0}}$). Later, we get Corollary \ref{CorPrinc} which provides another sufficient condition of optimality and shows that the Lagrange process $L_{y_0}$ (introduced in Definition \ref{LagrProc}) is a natural set-valued extension of the conventional concept for Lagrange multiplier for scalar convex programming. Besides, this corollary will turn out very fruitful later.

The first result in this section is a technical lemma which will be used later.
\begin{lemma}\label{Lemma_previo_Tma_principal}
Fix $y_0\in W(0)$ and let $L_{y_0}$ be the Lagrange process of $(P(0))$ at $y_0$. The following assertions hold true.
\begin{itemize}
\item[(i)] $0 \in L_{y_0}(-z_+)$, for every $z_+ \in Z_+$. 
\item[(ii)] $W(0) \subseteq
\left\lbrace f(x)+ L_{y_0}(g(x)): x \in \Omega  \right\rbrace$.
\item[(iii)] $Graph(W_{Y_+}) \subset (0,y_0) + \{(z,y)\in Z\times Y \colon (-z,y)\in \mbox{Graph}(L_{y_0})\}$.
\end{itemize}
\end{lemma}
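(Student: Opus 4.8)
The plan is to prove the three assertions in order, since they all flow from the structural results about $\mathcal{S}_{Y_+}(y_0)$ and $\text{Graph}(L_h)$ already established. I will handle the two cases $\mathcal{S}_{Y_+}(y_0) = \emptyset$ and $\mathcal{S}_{Y_+}(y_0) \neq \emptyset$ separately throughout, as the definition of $L_{y_0}$ bifurcates there. In the empty case, $\text{Graph}(L_{y_0}) = Z \times Y$, so $L_{y_0}(z) = Y$ for every $z$, and all three statements become almost trivial: (i) holds since $0 \in Y = L_{y_0}(-z_+)$; (ii) holds since $f(x) + L_{y_0}(g(x)) = f(x) + Y = Y$ for any feasible $x$, and $W(0) \subseteq Y$; and (iii) holds since the right-hand side is all of $Z \times Y$. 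So the substance is entirely in the case $\mathcal{S}_{Y_+}(y_0) \neq \emptyset$.

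For statement (i) in the nonempty case, fix $z_+ \in Z_+$ and an arbitrary $h \in \mathcal{S}_{Y_+}(y_0)$. By Lemma~\ref{LGrafLalfa}~(i), membership $(-z_+, 0) \in \text{Graph}(L_h)$ is equivalent to $h(-z_+, 0) \leq 0$, i.e. $\langle z_h^*, -z_+\rangle \leq 0$, which is exactly $\langle z_h^*, z_+\rangle \geq 0$ — precisely inequality \eqref{Primera_propiedad_h} of Lemma~\ref{LSeparatingHyperplane}~(ii). Since this holds for every $h$, we get $(-z_+, 0) \in \bigcap_h \text{Graph}(L_h) = \text{Graph}(L_{y_0})$, i.e. $0 \in L_{y_0}(-z_+)$. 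For statement (ii), I would take any $y_0' = f(x') \in W(0)$ with $x' \in \Lambda(0)$, so $g(x') \leq 0$, meaning $-g(x') \in Z_+$. By statement (i) applied to $z_+ = -g(x')$, we have $0 \in L_{y_0}(g(x'))$, hence $f(x') = f(x') + 0 \in f(x') + L_{y_0}(g(x'))$, which lies in the set on the right. This gives the inclusion $W(0) \subseteq \{f(x) + L_{y_0}(g(x)) : x \in \Omega\}$.

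Statement (iii) is the one I expect to carry the real content, and it is essentially a restatement of Lemma~\ref{LGrafLalfa}~(ii). I would argue as follows: take any $(\bar z, \bar y) \in \text{Graph}(W_{Y_+})$. I must show $(\bar z, \bar y) \in (0, y_0) + \{(z,y) : (-z, y) \in \text{Graph}(L_{y_0})\}$, which unwinds to requiring $(-\bar z, \bar y - y_0) \in \text{Graph}(L_{y_0}) = \bigcap_{h} \text{Graph}(L_h)$. But this is exactly the conclusion of Lemma~\ref{LGrafLalfa}~(ii), which asserts $(-\bar z, \bar y - y_0) \in \text{Graph}(L_h)$ for every $h \in \mathcal{S}_{Y_+}(y_0)$. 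Taking the intersection over all such $h$ yields the claim.

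The main obstacle, such as it is, is bookkeeping rather than difficulty: one must keep the translation by $(0, y_0)$ and the sign flip on the $Z$-coordinate straight in (iii), and must not forget to dispatch the $\mathcal{S}_{Y_+}(y_0) = \emptyset$ branch in each part. No new separation or convexity argument is needed; everything reduces to the characterization $\text{Graph}(L_h) = \{(z,y) : h(z,-y) \leq 0\}$ from Lemma~\ref{LGrafLalfa}~(i), the sign conditions \eqref{Primera_propiedad_h}--\eqref{Segunda_propiedad_h} on $z_h^*$ and $y_h^*$, and the already-proved Lemma~\ref{LGrafLalfa}~(ii). I would present the proof compactly, treating the empty case in a single sentence and then giving the three short deductions above for the nonempty case.
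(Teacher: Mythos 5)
Your proposal is correct and follows essentially the same route as the paper: your arguments for (ii) and (iii), and your dispatch of the $\mathcal{S}_{Y_+}(y_0)=\emptyset$ branch, coincide with the paper's own proof. The only cosmetic deviation is in (i), where the paper deduces $(-z_+,0)\in \mbox{Graph}(L_h)$ from Lemma~\ref{LGrafLalfa}~(ii) applied to $(z_+,y_0)\in \mbox{Graph}(W_{Y_+})$ (supplied by Lemma~\ref{LWConv}~(i)), while you verify $h(-z_+,0)\leq 0$ directly through the characterization in Lemma~\ref{LGrafLalfa}~(i) together with inequality (\ref{Primera_propiedad_h}); both are one-line consequences of the same previously established lemmas.
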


\begin{proof}
(i) If $S_{Y_+}(y_0)=\emptyset$ there is nothing to prove. Then, we assume $S_{Y_+}(y_0)\not =\emptyset$ and fix an arbitrary $z_+\in Z_+$. Since $y_0 \in W_{Y_+}(0)$, by Lemma \ref{LWConv} (i), we have that $ y_0 \in W_{Y_+}(z_+)$. Besides, by Lemma \ref{LGrafLalfa} (ii), $(-z_+,0) \in \mbox{Graph}(L_h) $ for all $h \in S_{Y_+}(y_0)$. Consequently $(-z_+,0) \in$ Graph$(L_{y_0})$ by definition of $L_{y_0}$.

(ii) On one hand we have 
\[\left\lbrace f(x)+ L_{y_0}(g(x)): x \in \Omega, \, g(x)\leq 0 \right\rbrace \subseteq
\left\lbrace f(x)+ L_{y_0}(g(x)): x \in \Omega  \right\rbrace.\]
On the other hand, by assertion (i), $ g(x)\leq 0$ implies $0 \in L_{y_0}(g(x))$. Hence,
\[W(0)=\left\lbrace f(x)+ 0: x \in \Omega, \, g(x)\leq 0 \right\rbrace \subseteq
\left\lbrace f(x)+ L_{y_0}(g(x)): x \in \Omega , \, g(x)\leq 0 \right\rbrace.\]

(iii) If $S_{Y_+}(y_0)=\emptyset$ there is nothing to prove. Then, we assume $S_{Y_+}(y_0)\not =\emptyset$ and take $(\bar{z},\bar{y})\in$ Graph$(W_{Y_+})$. We write $(\bar{z},\bar{y})=(0,y_0)+(\bar{z},\bar{y}-y_0)$. By Lemma \ref{LGrafLalfa} (ii), $(-\bar{z},\bar{y}-y_0) \in $ Graph$(L_h)$ for all  $h \in S_{Y_+}(y_0)$. Thus $(-\bar{z},\bar{y}-y_0) \in $ Graph$(L_{y_0})$ by definition of $L_{y_0}$. 
\end{proof}


\begin{proposition}\label{PropLalfa} 
Let us fix $y_0 \in W(0)$ and $h \in S_{Y_+}(y_0)$. Then  $y_0$ is a weak minimal point of the program
\[
\text{  Min \,} f(x) + L_h(g(x)) \text{ \  such that \ } x\in \Omega. \hspace{2cm} (D_{L_h})
\]
\end{proposition}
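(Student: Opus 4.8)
The plan is to unwind the definition of weak minimal point applied to the set $M:=\{f(x)+L_h(g(x)):x\in\Omega\}$, which is the objective set of program $(D_{L_h})$ in the sense of Definition~\ref{ProgramDL}. Two things must be checked: first that $y_0\in M$, and then that $M\cap(y_0-\mbox{Int}\,Y_+)=\emptyset$.

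First I would establish the membership $y_0\in M$. Since $y_0\in W(0)=f(\Lambda(0))$, there is a feasible $x_0\in\Omega$ with $g(x_0)\le 0$ and $f(x_0)=y_0$. Writing $g(x_0)=-z_+$ with $z_+\in Z_+$, Lemma~\ref{LGrafLalfa}~(i) tells me that $0\in L_h(g(x_0))$ is equivalent to $h(g(x_0),0)\le 0$, that is $\langle z^*_h,-z_+\rangle\le 0$; this is precisely the positivity (\ref{Primera_propiedad_h}) of $z^*_h$ granted by Lemma~\ref{LSeparatingHyperplane}~(ii). Hence $y_0=f(x_0)+0\in f(x_0)+L_h(g(x_0))\subset M$.

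For the core assertion I would argue by contradiction, assuming some $x\in\Omega$ and $y\in L_h(g(x))$ satisfy $f(x)+y<y_0$, i.e. $y_0-(f(x)+y)\in\mbox{Int}\,Y_+$. The strategy is to test this relation with the functional $h$, decomposed as $h(z,y)=z^*_h(z)+y^*_h(y)$ by Lemma~\ref{LSeparatingHyperplane}~(ii). Two inequalities drive the argument. On one hand, $y\in L_h(g(x))$ means $(g(x),y)\in\mbox{Graph}(L_h)$, so Lemma~\ref{LGrafLalfa}~(i) gives $h(g(x),-y)\le 0$, i.e. $z^*_h(g(x))\le y^*_h(y)$. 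On the other hand, $(g(x),f(x))\in\mbox{Graph}(W_{Y_+})$ (take $z_+=y_+=0$ in the description of the graph recorded in the proof of Lemma~\ref{LWConv}), so the separating inequality defining $\mathcal{S}_{Y_+}(y_0)$ yields $y^*_h(y_0)\le z^*_h(g(x))+y^*_h(f(x))$. Chaining these two produces $y^*_h(y_0)\le y^*_h(f(x)+y)$.

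The final step, and the place where the Slater hypothesis is indispensable, is to contradict this with a strict inequality coming from $y_0-(f(x)+y)\in\mbox{Int}\,Y_+$. Here I would invoke $y^*_h\neq 0$ --- guaranteed by Lemma~\ref{LSeparatingHyperplane}~(iii) under Assumption~\ref{Hip_Slater} --- together with the positivity (\ref{Segunda_propiedad_h}) of $y^*_h$ on $Y_+$. A nonzero positive functional is strictly positive on $\mbox{Int}\,Y_+$: if $y^*_h$ vanished at an interior point $u$, then $u\pm\varepsilon v\in Y_+$ for every $v$ and all small $\varepsilon>0$ would force $y^*_h(v)=0$ for all $v$, contradicting $y^*_h\neq 0$. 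Consequently $y^*_h(y_0-(f(x)+y))>0$, i.e. $y^*_h(y_0)>y^*_h(f(x)+y)$, contradicting the chained inequality. I expect this last step --- pinning down strict positivity on the interior and recognising that it is exactly Slater, through $y^*_h\neq 0$, that rules out the degenerate ``vertical'' case --- to be the only real subtlety; the remainder is bookkeeping with the two defining inequalities.
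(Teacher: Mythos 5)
Your proposal is correct and takes essentially the same route as the paper: argue by contradiction, test the relation $f(x)+y\in y_0-\mbox{Int}\,Y_+$ against the separating functional $h=z^*_h+y^*_h$, chain the inequality $h(0,y_0)\leq h(g(x),f(x))$ with the graph condition on $L_h$, and derive the contradiction from $y^*_h\neq 0$ (Slater, Lemma~\ref{LSeparatingHyperplane}~(iii)) being strictly positive on $\mbox{Int}\,Y_+$. The only cosmetic differences are that you invoke the halfspace description $h(z,-y)\leq 0$ of $\mbox{Graph}(L_h)$ from Lemma~\ref{LGrafLalfa}~(i) where the paper instead decomposes $y=\bar{y}+y_+$ with $\bar{y}\in\bar{L}_h(g(\bar{x}))$ and uses $h(g(\bar{x}),-\bar{y})=0$, and that you spell out the membership $y_0\in M$ and the strict-positivity-on-the-interior step, both of which the paper leaves implicit.
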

\begin{proof}
Fix  $h \in S_{Y_+}(y_0)$, and suppose that $y_0$ is not a weak minimal point of ($D_{L_h}$). Then, there exist $\bar{x} \in \Omega$,  $\bar{y} \in \bar{L}_h(g(\bar{x}))$, and $y_+ \in Y_+ $   such that $f(\bar{x})+\bar{y}+y_+ \in y_0 - \text{Int\,} Y_+$.
Define $\hat{y}:=f(\bar{x})+\bar{y}+y_+ - y_0 \in - \text{Int\,} Y_+$.   
On one hand, by Lemma \ref{LSeparatingHyperplane}~(ii), $h(0,\hat{y})=\langle y^*_h, \hat{y} \rangle  \leq 0$. By Assumption \ref{Hip_Slater} and Lemma \ref{LSlater} (iii) we have $y^*_h \not =0$, and then we get
\begin{equation}
h(0,\hat{y})=\langle y^*_h, \hat{y} \rangle  \, <0. \label{des1} 
\end{equation}
On the other hand, $(0, \hat{y})=(g(\bar{x}), f(\bar{x})) - (g(\bar{x}),-\bar{y}) + (0,y_+)-(0,y_0)$. Since $(g(\bar{x}), f(\bar{x})) \in$ Graph$(W_{Y_+})$, $h (g(\bar{x}), f(\bar{x}))\geq h(0,y_0) $. Besides $h(g(\bar{x}), -\bar{y})= 0 $ (because $\bar{y} \in \bar{L}_h(g(\bar{x}))$) and $ h(0,y_+)\geq 0 $ (because $y_+ \in Y_+$). It follows that
$h(0, \hat{y})=h(g(\bar{x}), f(\bar{x})) - h(g(\bar{x}),-y) + h(0,y_+)-h(0,y_0)\geq 0$,
which contradicts (\ref{des1}). 
\end{proof}

\noindent The following theorem constitutes the benchmark of the whole paper. In fact, the rest of the article spins around it.

\begin{theorem}\label{TeoPrinc} 
Fix $y_0\in W(0)$ and let $L_{y_0}$ be the Lagrange process of $(P(0))$ at $y_0$. The following statements hold true. 
\begin{itemize}
\item[(i)] If $y_0$ is a minimal point of $(P(0))$, then $y_0$ is a weak minimal point of $(D_{L_{y_0}})$.
\item[(ii)] If $y_0$ is a minimal (resp. weak minimal) point of $(D_{L_{y_0}})$, then $y_0$ is a minimal (resp. weak minimal) point of $(P(0))$.
\end{itemize}
\end{theorem}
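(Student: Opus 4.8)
The plan is to reduce both statements to elementary set-inclusion arguments, invoking the earlier results rather than redoing any separation or convexity analysis. For (i) I would exploit that $\mbox{Graph}(L_{y_0})$ is by Definition \ref{LagrProc} an intersection of the graphs $\mbox{Graph}(L_h)$, so that the image set of $(D_{L_{y_0}})$ lies inside that of each $(D_{L_h})$; since Proposition \ref{PropLalfa} already gives weak minimality of $y_0$ for every single $(D_{L_h})$, and weak minimality is inherited by subsets, the claim follows. For (ii) I would instead use the reverse inclusion $W(0)\subseteq\{f(x)+L_{y_0}(g(x)):x\in\Omega\}$ supplied by Lemma \ref{Lemma_previo_Tma_principal} (ii), together with the fact that (weak) minimality of a point persists on any subset containing it.

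For (i), first I would note that minimality of $y_0$ for $(P(0))$ forces $\mathcal{S}_{Y_+}(y_0)\neq\emptyset$ via Lemma \ref{LSeparatingHyperplane} (i), so that $\mbox{Graph}(L_{y_0})=\bigcap_{h}\mbox{Graph}(L_h)$. Writing $M:=\{f(x)+L_{y_0}(g(x)):x\in\Omega\}$, I would record that $y_0\in M$ because $y_0\in W(0)\subseteq M$ by Lemma \ref{Lemma_previo_Tma_principal} (ii). Then for any fixed $h\in\mathcal{S}_{Y_+}(y_0)$ the intersection formula gives $L_{y_0}(z)\subseteq L_h(z)$ for all $z$, hence $M\subseteq M_h:=\{f(x)+L_h(g(x)):x\in\Omega\}$. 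Proposition \ref{PropLalfa} says $M_h\cap(y_0-\mbox{Int}\,Y_+)=\emptyset$, so the same holds for the smaller set $M$, and with $y_0\in M$ this is exactly $y_0\in\mbox{WMin}(M)$, i.e. the weak minimality of $y_0$ for $(D_{L_{y_0}})$.

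For (ii), with the same $M$, Lemma \ref{Lemma_previo_Tma_principal} (ii) gives $W(0)\subseteq M$ and $y_0\in W(0)$. If $y_0\in\mbox{Min}(M)$, then $M\cap(y_0-Y_+)\subseteq y_0+Y_+$, so intersecting with the smaller set $W(0)$ yields $W(0)\cap(y_0-Y_+)\subseteq y_0+Y_+$, i.e. $y_0\in\mbox{Min}(W(0))$; the weak case is identical, replacing $Y_+$ by $\mbox{Int}\,Y_+$ and the inclusion by emptiness of the intersection. The main point demanding care, and the only place where an inclusion could be reversed by accident, is the direction of containment between $M$ and $M_h$ in (i): because $L_{y_0}$ is built by \emph{intersecting} graphs its values shrink, so $M\subseteq M_h$ and weak minimality transfers in the correct direction. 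Beyond tracking these inclusions against the definitions of Min and WMin, I anticipate no genuine obstacle, the substantive work having been discharged already in Proposition \ref{PropLalfa} and Lemma \ref{Lemma_previo_Tma_principal}.
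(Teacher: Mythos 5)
Your proposal is correct and follows essentially the same route as the paper's proof: part (i) reduces to Proposition \ref{PropLalfa} via the inclusion $\mbox{Graph}(L_{y_0})\subseteq\mbox{Graph}(L_h)$, and part (ii) rests on the inclusion $W(0)\subseteq\{f(x)+L_{y_0}(g(x)):x\in\Omega\}$ from Lemma \ref{Lemma_previo_Tma_principal}~(ii). The only differences are cosmetic: you argue (ii) directly by nested set inclusions where the paper passes to the contrapositive, and you make explicit two points the paper leaves implicit, namely $\mathcal{S}_{Y_+}(y_0)\neq\emptyset$ via Lemma \ref{LSeparatingHyperplane}~(i) and the membership $y_0\in M$.
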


\begin{proof}
$(i)$  Take $h_0 \in S_{Y_+}(y_0)$. From Proposition \ref{PropLalfa}, $y_0$ is a weak minimal point of $(D_{L_{h_0}})$. Hence, there are no $\bar{x} \in \Omega$ and  $y' \in L_{h_0}(g(\bar{x}))$ such that $f(\bar{x})+ y' \in y_0 - \text{Int\,} Y_+$.
Now, since Graph$(L) \subset$ Graph$(L_{h_0})$, there are no
$\bar{x} \in \Omega$ and $y' \in L(g(\bar{x}))$ such that $f(\bar{x})+ y' \in y_0 - \text{Int\,} Y_+$. 

$(ii)$ We only prove the case of minimality, the proof for the case of weak minimality is similar. If $y_0$ is not a minimal point of $(P(0))$, then there exists $\bar{x} \in \Omega$ with $g(\bar{x}) \leq 0 $  such that $f(\bar{x}) \in y_0 -  Y_+$. By Lemma~\ref{Lemma_previo_Tma_principal}~(ii), $f(\bar{x}) =f(x)+y'$ for some $x \in \Omega$ such that $y'\in L_{y_0}(g(x))$. Hence $f(x)+y'\in  y_0 -  Y_+$, and so, $y_0$ is not a minimal point of  $(D_{L_{y_0}})$.
\end{proof}

\begin{remark}
According to Theorem \ref{TeoPrinc} (i), every minimal point of program $(P(0))$ has an associated weak Lagrange multiplier $L_{y_0}$ that coincides with its corresponding Lagrange process. On the other hand, by Theorem \ref{TeoPrinc} (ii), such a process $L_{y_0}$ provides  a sufficient optimality condition through the dual program $(D_{L_{y_0}})$.
\end{remark}
In the following example  we consider a particular family of programs $(P(z))$ from Section \ref{Preliminaries and Notations}. Such an example shows that statement (i) in Theorem \ref{TeoPrinc} is not sufficient.
\begin{example}\label{Example}
Let $X=Y=\mathbb{R}^2$, $Y_+=\left\lbrace (x_1,x_2) \in \mathbb{R}^2 : x_1 \geq 0, x_2 \geq 0  \right\rbrace$, $Z=\mathbb{R}$, $Z_+=\mathbb{R}_+$, $D= \left\lbrace (x_1,x_2) \in \mathbb{R}^2 : x_2 > 0  \right\rbrace \cup \lbrace (0,0) \rbrace$, $f(x_1,x_2)=(x_1,x_2)$, and $g(x_1,x_2)=x_2-1$. Then $y_0=(0,0)$ is a minimal point of ($P(0)$)  but not a minimal point of $(D_{L_{y_0}})$.
\end{example}
\begin{proof}
Indeed, we have 
\begin{equation*} 
\Lambda(z) =
\begin{cases}
\left\lbrace (x_1,x_2) \in \mathbb{R}^2 : 0 < x_2 \leq 1+z  \right\rbrace \cup \lbrace (0,0) \rbrace,& \text{ if } z \in [-1,\infty), \\
\emptyset, & \text{ otherwise.}
\end{cases}
\end{equation*}
Besides, $W(z)=f(\Lambda(z))=\Lambda(z)$ and
\begin{equation*} 
W_{Y_+}(z)= f(\Lambda(z))+Y_+ = \begin{cases}
A, &\text{   if } z \in [-1,\infty), \\
\emptyset,&  \text{ otherwise,} 
\end{cases} 
\end{equation*}
where $A=\lbrace (x_1,x_2) \in \mathbb{R}^2 : 0 \leq x_1, 0 \leq x_2 \rbrace \cup \lbrace (x_1,x_2) \in \mathbb{R}^2 : x_1 < 0 < x_2 \rbrace$. So, it is immediate that
\begin{equation*} 
\mbox{Min}(\left\lbrace f(x): x \in \Omega, \, g(x)\leq z \right\rbrace)= \begin{cases}
\lbrace (0,0) \rbrace,& \text{   if } z \in [-1,\infty), \\
\emptyset, &  \text{ otherwise. } 
\end{cases} 
\end{equation*}

Now, consider the particular program ($P(0)$), and on it, $(0,y_0)=(0,(0,0))$. Now we get that Graph$(W_{Y_+})= [-1, \infty) \times A \subset \mathbb{R}^3$. Thus $S_{Y_+}=\left\lbrace (0,0,\lambda):\lambda >0 \right\rbrace$, which yields  Graph$(L_{y_0})=\mathbb{R} \times \mathbb{R} \times \mathbb{R}_+ $. Consequently $L_{y_0}(z)= \mathbb{R} \times \mathbb{R}_+ $ for all $z \in Z=\mathbb{R} $. It follows that
\[ 
\left\lbrace f(x_1,x_2)+L_{y_0}(g(x_1,x_2)):  (x_1,x_2) \in \Omega \right\rbrace=\Omega + (\mathbb{R} \times \mathbb{R}_+)=\mathbb{R} \times \mathbb{R}_+,
\] 
and hence,
\[
\mbox{Min}( \left\lbrace f(x_1,x_2)+L_{y_0}(g(x_1,x_2)): (x_1,x_2) \in \Omega \right\rbrace  \mid Y_+)=\mbox{Min}(\mathbb{R} \times \mathbb{R}_+ \mid \mathbb{R} \times \mathbb{R}_+)=\emptyset.
\]
Thus, $y_0=(0,0)$ is not a minimal point of $(D_{L_{y_0}})$; however it is a weak minimal point of $(D_{L_{y_0}})$ since 
\[
\left( \mathbb{R} \times \mathbb{R}_+\right)  \cap \left( (0,0) - \text{Int} ( \mathbb{R} \times \mathbb{R}_+) \right) =
\left( \mathbb{R} \times \mathbb{R}_+\right)  \cap \left(  \mathbb{R} \times (\mathbb{R}_{+}\setminus \{0\}) \right) =\emptyset. 
\]
\end{proof}

\noindent The following  corollary is an immediate consequence of Theorem \ref{TeoPrinc}. 

\begin{corollary}\label{CorPrinc} 
Suppose that $Y_+ \setminus \{ 0 \}$ is open. Fix $y_0\in W(0)$ and let $L_{y_0}$ be the Lagrange process of $(P(0))$ at $y_0$. Then, $y_0$ is a minimal point of $(P(0))$ if and only if $y_0$ is a minimal point of $(D_{L_{y_0}})$.
\end{corollary}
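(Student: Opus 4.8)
The plan is to obtain this corollary as an essentially immediate consequence of Theorem \ref{TeoPrinc}, the only new ingredient being the topological fact recorded in Section \ref{Preliminaries and Notations}: when $Y_+\setminus\{0\}$ is open, one has $\mathrm{WMin}(A)=\mathrm{Min}(A)$ for every $A\subset Y$. The role of the openness hypothesis is precisely to collapse weak minimality into minimality, thereby bridging the gap left by Theorem \ref{TeoPrinc}~(i).

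For the forward implication I would assume that $y_0$ is a minimal point of $(P(0))$ and invoke Theorem \ref{TeoPrinc}~(i) to conclude that $y_0$ is a weak minimal point of $(D_{L_{y_0}})$. Applying the equality $\mathrm{WMin}(A)=\mathrm{Min}(A)$ to the set $A=\{f(x)+L_{y_0}(g(x))\colon x\in\Omega\}$, which is licit because $Y_+\setminus\{0\}$ is open, I then upgrade this weak minimal point to a genuine minimal point of $(D_{L_{y_0}})$.

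For the converse I would simply assume that $y_0$ is a minimal point of $(D_{L_{y_0}})$ and apply the minimality case of Theorem \ref{TeoPrinc}~(ii) directly, which yields that $y_0$ is a minimal point of $(P(0))$. No openness assumption is needed in this direction, since Theorem \ref{TeoPrinc}~(ii) already handles minimality without qualification.

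I do not anticipate any genuine obstacle here, as all the hard work has been front-loaded into Theorem \ref{TeoPrinc} and into the observation about $\mathrm{WMin}$ versus $\mathrm{Min}$. The only point worth emphasizing is \emph{why} the hypothesis is required: Theorem \ref{TeoPrinc}~(i) delivers only weak minimality, which is strictly weaker than minimality in general, as Example \ref{Example} shows. The openness of $Y_+\setminus\{0\}$ is exactly what is needed to identify the two notions and thus to promote the necessary condition of Theorem \ref{TeoPrinc}~(i) into a full equivalence.
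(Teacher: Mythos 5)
Your proof is correct and follows exactly the paper's own argument: the openness of $Y_+\setminus\{0\}$ makes $\mathrm{WMin}(A)=\mathrm{Min}(A)$ (as recorded in Section \ref{Preliminaries and Notations}), after which both directions of Theorem \ref{TeoPrinc} combine to give the equivalence. Your additional remark that the converse needs no openness is accurate and consistent with the paper's reasoning.
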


\begin{proof}
Since  $Y_+  \setminus \{ 0 \}$ is taken open, the concepts of minimal and weak minimal points coincide. Now, a direct application of Theorem \ref{TeoPrinc} yields the result. 
\end{proof}

\noindent The next remark shows how the traditional Lagrange multiplier theorem for scalar convex programming remains in Theorem \ref{TeoPrinc} as a particular case.
\begin{remark}
The application of Corollary \ref{CorPrinc} to the case $Y=\mathbb{R}$ and $Y=\mathbb{R}_+$ yields the classical Lagrange multiplier theorem for the scalar convex programming. Indeed, the set $Y_+  \setminus \{ 0 \}=(0,+\infty)$ is  open and the concepts of minimal and minimum coincide in the scalar case.
\end{remark}

\section{Sufficient Optimality Conditions}\label{Sec_Suf_Opt_Cond}
The objective in this section is to set up some sufficient optimality criteria for the program $(P(0))$. In particular, we will establish the second part of statement (i) of Theorem \ref{Main_Theorem} (in the introduction)  together with assertions (ii) and (iii). Let us note that the second part of statement (i) corresponds to Theorem \ref{TNorm} next in the first subsection, and statements (ii) and (iii) corresponds to Theorem \ref{ThPrEf}  in the second subsection. In this way, we provide two types of criteria, some based on the geometry of the Lagrange process $L_{y_0}$ (in the next subsection), and other based on the particular characteristics of the minimal points $y_0$ involved (in the second subsection).

\subsection{Geometric Optimality Conditions}
By statement (i) in Theorem \ref{TeoPrinc} any $y_0\in W(0)$ minimal point of $(P(0))$ is also a weak minimal point of $(D_{L_{y_0}})$. Next, we will prove that if the graph of the corresponding Lagrange process $L_{y_0}$ has a bounded base (or equivalently, if its vertex is a denting point), then $y_0$ becomes a minimal point of $(D_{L_{y_0}})$. This result will turn out a useful tool to distinguish the minimal points of program $(P(0))$ from among the weak minimal points of the unconstrained program $(D_{L_{y_0}})$.

For this purpose, we will prove some previous technical results. First, we recall some basic concepts. Fixed a normed space $E$, the negative polar cone to subset $S_1 \subseteq E$ (resp. $S_2 \subseteq E^*$) is defined by $S_1^-:=\{ e^* \in E^*: \langle e^*,s \rangle  \leq 0,\,   \forall  s \in S_1 \}$ (resp. $S_2^-:=\{ e \in E: \langle s^*,e \rangle   \leq 0,\, \forall s^* \in S_2\}$). Their corresponding positive polar cones are defined by $S_1^+:=-S_1^-\subset E^*$ and $S_2^+:=-S_2^-\subset E$. 
Let us recall that a base for a cone $K$ in a normed space $E$ is a nonempty convex subset $B\subset K$ such that $0 \not \in \overline{B}$ and each $k\in K\setminus \{0\}$ has a unique representation of the form $k=\lambda b$ for some $\lambda>0$ and $b\in B$. A base is called a bounded base if it is  bounded as a subset of $E$. Bounded bases have been widely studied (see, for example \cite{GARCIACASTANO20151178} and the references therein) and they provide a useful tool in many topics such as in the theory of Pareto minimal points in \cite{Borwein1993}, in reflexivity of Banach spaces in \cite{Casini2010a}, and in density theorems in \cite{Gong1995}. In the following result, this notion will turn out useful again.

\begin{theorem}\label{TNorm} 
Fix $y_0\in W(0)$ and  let $L_{y_0}$ be the Lagrange process of $(P(0))$ at $y_0$.  Suppose that the cone  Graph$(L_{y_0})$ has a bounded base. Then $y_0$ is a minimal point of $(P(0))$ if and only if $y_0$ is a minimal point of  $(D_{L_{y_0}})$.
\end{theorem}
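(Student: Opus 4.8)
\emph{Overall approach.} The equivalence has a trivial half and a substantial half. For the implication ``$y_0$ minimal for $(D_{L_{y_0}})$ $\Rightarrow$ $y_0$ minimal for $(P(0))$'' I would simply invoke the minimality case of Theorem~\ref{TeoPrinc}~(ii); this needs neither the bounded base nor any new argument. The content lies in the converse, and my plan is to prove it \emph{directly}, reducing the set-valued minimality question to the pointedness of a single cone extracted from $\mathrm{Graph}(L_{y_0})$. (Theorem~\ref{TeoPrinc}~(i) already yields that $y_0$ is a weak minimal point of $(D_{L_{y_0}})$, but I will obtain full minimality without passing through it.)

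\emph{Reduction to a single cone.} First I would record the consequences of the hypotheses. Since $y_0$ is a minimal point of $(P(0))$, Lemma~\ref{LSeparatingHyperplane}~(i) gives $\mathcal{S}_{Y_+}(y_0)\neq\emptyset$, so Lemma~\ref{Lemma_previo_Tma_principal}~(iii) is available: writing $\tilde{K}:=\{(z,y)\in Z\times Y : (-z,y)\in \mathrm{Graph}(L_{y_0})\}$ (the image of the closed convex cone $\mathrm{Graph}(L_{y_0})$ under the isomorphism $(z,y)\mapsto(-z,y)$, hence again a closed convex cone), the lemma reads $\mathrm{Graph}(W_{Y_+})\subseteq(0,y_0)+\tilde{K}$. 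Abbreviating $M:=\{f(x)+L_{y_0}(g(x)):x\in\Omega\}$ and $C:=L_{y_0}(0)$, the crucial step is the inclusion $M-y_0\subseteq C$. To establish it, for $m=f(x)+y\in M$ with $(g(x),y)\in\mathrm{Graph}(L_{y_0})$ I would use the following identity:
\[
(0,\,m-y_0)=\big[(g(x),f(x))-(0,y_0)\big]+(-g(x),y).
\]
The first bracket lies in $\tilde{K}$ by Lemma~\ref{Lemma_previo_Tma_principal}~(iii) (as $(g(x),f(x))\in\mathrm{Graph}(W_{Y_+})$), the second lies in $\tilde{K}$ by the very definition of $\tilde{K}$, and a convex cone is closed under addition, so their sum lies in $\tilde{K}$; this says precisely $m-y_0\in C$.

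\emph{Closing with the bounded base.} To finish I would use that $\mathrm{Graph}(L_{y_0})$ is pointed, which is automatic from the existence of a base: a nontrivial $k$ with $k,-k$ in the cone would yield a convex combination of base points equal to $0$, contradicting $0\notin\overline{B}$. Pointedness passes to the subcone $C=\{y:(0,y)\in\mathrm{Graph}(L_{y_0})\}$, giving $C\cap(-C)=\{0\}$. Since also $Y_+\subseteq C$ (because $0\in\bar{L}_h(0)$ forces $Y_+\subseteq L_h(0)$ for every $h$, hence $Y_+\subseteq\bigcap_h L_h(0)=C$), any $m\in M\cap(y_0-Y_+)$ satisfies both $m-y_0\in C$ and $m-y_0\in -Y_+\subseteq -C$, whence $m-y_0\in C\cap(-C)=\{0\}$ and $m=y_0$. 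Thus $M\cap(y_0-Y_+)=\{y_0\}\subseteq y_0+Y_+$, i.e. $y_0\in\mathrm{Min}(M)$, which is the asserted minimality of $y_0$ for $(D_{L_{y_0}})$.

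\emph{Main obstacle.} The main difficulty is conceptual rather than computational: recognizing that minimality for the set-valued program collapses to the single inclusion $M-y_0\subseteq C$, after which only pointedness of $C$ is needed and the argument terminates at once. It is worth noting that this route invokes the bounded base only through pointedness; the bounded base (equivalently, $0$ being a denting point) is the clean geometric hypothesis guaranteeing that $\mathrm{Graph}(L_{y_0})$, and hence $C$, is pointed, and it also excludes the degenerate case $\mathcal{S}_{Y_+}(y_0)=\emptyset$, in which $\mathrm{Graph}(L_{y_0})=Z\times Y$ fails to be pointed at all.
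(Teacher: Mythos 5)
Your proof is correct, but it takes a genuinely different route from the paper's. (The easy direction is handled identically in both: it is just Theorem~\ref{TeoPrinc}~(ii).) For the substantial direction the paper argues by contradiction: assuming $y_0$ minimal for $(P(0))$ but not for $(D_{L_{y_0}})$, it forms $\hat{y}=f(\bar{x})+\bar{y}-y_0\in -Y_+\setminus\{0\}$, shows $h(0,\hat{y})\geq 0$ for every $h\in\mathcal{S}_{Y_+}(y_0)$, and then uses the bounded base in an essentially functional-analytic way: by \cite[Theorem 1.1]{GARCIACASTANO20151178} the bounded base gives $\mathrm{Int}\left(\mathrm{Graph}(L_{y_0})\right)^-\neq\emptyset$, hence a functional $\bar{h}$ strictly positive on $\mathrm{Graph}(L_{y_0})\setminus\{0\}$; flipping its $Z$-component produces $h_0\in\mathcal{S}_{Y_+}(y_0)$ (via Lemma~\ref{LSeparatingHyperplane}~(iv) and Lemma~\ref{Lemma_previo_Tma_principal}~(iii)) with $h_0(0,\hat{y})<0$, the desired contradiction. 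You instead argue directly and primally: your inclusion $M-y_0\subseteq L_{y_0}(0)$ is sound --- the two summands in $(0,m-y_0)=\bigl[(g(x),f(x))-(0,y_0)\bigr]+(-g(x),y)$ do lie in the reflected cone $\tilde{K}$ by Lemma~\ref{Lemma_previo_Tma_principal}~(iii) and the definition of $\tilde{K}$, and it can be double-checked functional-by-functional, since $y^*_h(y_0)-y^*_h(f(x))\leq z^*_h(g(x))$ and $z^*_h(g(x))\leq y^*_h(y)$ sum to $h(0,-(m-y_0))\leq 0$ --- and then $Y_+\subseteq L_{y_0}(0)$ plus pointedness of $L_{y_0}(0)$ forces $M\cap(y_0-Y_+)=\{y_0\}$. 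What each approach buys: the paper's explicit construction of a strictly positive separating functional is exactly what later powers Corollary~\ref{CoroTNorm}, whereas your argument is more elementary (no polar cones, no dentability) and in fact proves a formally stronger theorem, since you invoke the bounded base only through pointedness of $\mathrm{Graph}(L_{y_0})$, a strictly weaker hypothesis in infinite dimensions (the positive cone of $\ell^2$ is pointed and has a base, but no bounded base). Your strengthening is consistent with Example~\ref{Example}, whose cone $\mathbb{R}\times\mathbb{R}\times\mathbb{R}_+$ is not pointed; it does, however, show that the paper's remark that Example~\ref{Example} proves ``bounded'' cannot be dropped is imprecise --- that example only shows the hypothesis cannot be deleted wholesale. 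One further observation, which applies equally to the paper's proof: minimality of $y_0$ for $(P(0))$ enters your argument only through $\mathcal{S}_{Y_+}(y_0)\neq\emptyset$, so under the pointedness hypothesis the forward implication actually holds for any $y_0\in W(0)$ with $\mathcal{S}_{Y_+}(y_0)\neq\emptyset$; this is a feature of the theorem itself, not a defect of your proof.
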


\begin{proof} 
We will prove the first implication by contradiction. Suppose that $y_0$ is a minimal point of $(P(0))$ but it is not a minimal point of $(D_{L_{y_0}})$. Hence, there exist $\bar{x}\in \Omega$ and $\bar{y} \in L_{y_0}(g(\bar{x}))$ such that
\begin{equation}
f(\bar{x})+\bar{y} \in y_0 -  Y_+ \setminus \ \{ 0 \}. \label{eq2}
\end{equation}
By the definition of $L_{y_0}$, we have $\bar{y} \in L_h (g(\bar{x}))$ for every $h \in S_{Y_+}(y_0)$. Then, for every $h \in S_{Y_+}(y_0)$ we can decompose $\bar{y}=y'_h + y_h^+$ where $y'_h \in \bar{L}_h (g(\bar{x})) $ and $y_h^+ \in Y_+ $.  
Now, we denote $\hat{y}: = f(\bar{x})+\bar{y} - y_0\in -Y_+\setminus \{0\}$, that can be expressed as 
$\hat{y} = f(\bar{x})+y'_h + y_h^+ - y_0$ for every  $h \in S_{Y_+}(y_0)$. Then 
\begin{equation}
h(0, \hat{y})=h(g(\bar{x}), f(\bar{x})) - h(g(\bar{x}),-y'_h) + h(0,y_h^+)-h(0,y_0)\geq 0,
\label{eq3}
\end{equation} for all $h \in \mathcal{S}_{Y_+}(y_0)$ because $(g(\bar{x}), f(\bar{x})) \in \mbox{Graph}(W_{Y_+})$ and $y'_h\in \bar{L}_h(g(\bar{x}))$.

However, we will prove the existence of some $h_0 \in \mathcal{S}_{Y_+}(y_0)$ such that $h_0(0,\hat{y})~<~0$, a contradiction. Indeed,  since  Graph$(L_{y_0})$ has a bounded base, by \cite[Theorem 1.1]{GARCIACASTANO20151178}, Int$\left( \mbox{Graph}(L_{y_0})\right)^-\neq \emptyset$. Now, we pick
\begin{equation}
\bar{h} (\cdot,\cdot)=\langle\bar{z}^*, \cdot  \rangle  + \langle\bar{y}^* , \cdot \rangle  \, \in \, -\text{Int}  \left( \mbox{Graph}(L_{y_0})\right)^-, \label{hbarra}
\end{equation} 
and define the following continuous linear map $h_0(\cdot,\cdot)=\langle-\bar{z}^*, \cdot  \rangle  + \langle\bar{y}^*, \cdot \rangle $. Let us check that $h_0 \in \mathcal{S}_{Y_+}(y_0)$.  Indeed, if $(\bar{z},\bar{y} ) \in$ Graph$(W_{Y_+})$, by Lemma \ref{Lemma_previo_Tma_principal} (iii) we can decompose $(\bar{z},\bar{y} )= (0,y_0)+ (z', y')$ with $ (-z', y') \in$ Graph$(L_{y_0})$. From (\ref{hbarra}) we have $0  <  \bar{h}(-z',y')= \langle\bar{z}^*, -z'  \rangle  + \langle\bar{y}^*, y' \rangle  = \langle-\bar{z}^*, z'  \rangle  + \langle\bar{y}^*, y' \rangle  =h_0(z',y')$. Therefore $h_0(\bar{z},\bar{y})= h_0(0,y_0)+h_0(z',y')> h_0(0,y_0)$. Consequently, by statement (iv) of Lemma~\ref{LSeparatingHyperplane} we obtain $h_0 \in \mathcal{S}_{Y_+}(y_0).$  On the other hand, by Lemma~\ref{LWConv}~(i), $ (0,y_0) +(0,-\hat{y}) \in$ Graph$(W_{Y_+}).$ Thus, by Lemma~\ref{Lemma_previo_Tma_principal} (iii), we have $(0,-\hat{y}) \in$ Graph$(L_{y_0})$. Therefore, $h_0(0,\hat{y})=\langle\bar{y}^*, \hat{y}\rangle=\bar{h}(0,\hat{y})<0$, which contradicts (\ref{eq3}). As a consequence  $y_0$ is a minimal point of $(D_{L_{y_0}})$.

The reverse implication is a direct consequence of statement (ii) in Theorem \ref{TeoPrinc}. 
\end{proof}

\begin{remark}
Example \ref{Example} above also shows that the condition ``bounded'' can not be \break dropped down from the statement of the former theorem.
\end{remark}
By \cite[Theorem 1.1]{GARCIACASTANO20151178}, the assumption in the former theorem is equivalent to the property that the vertex is a denting point of the cone Graph$(L_{y_0})$. Example~\ref{Example} also shows that such a condition cannot be weakened to the property that the origin is a point of continuity for the cone Graph$(L_{y_0})$. For more information about the notion of point of continuity we refer the reader to \cite{GARCIACASTANO2018} and the references therein.

Despite what we have noted in the former paragraph, Theorem \ref{TNorm} can be weakened in the way as the following result shows. From now on, bd$A$ denotes the boundary of the set $A$.

\begin{corollary}\label{CoroTNorm} 
Fix $y_0=W(0)$ and  let $L_{y_0}$ be the Lagrange process of $(P(0))$ at $y_0$.  Suppose that there exists $h_{0} \in \mathcal{S}_{Y_+}(y_0)$  such that $h_{0}(0,y)>0$, for every $y \in \text{bd\,}(Y_+) \setminus \{0\}$. Then $y_0$ is a minimal point of $(P(0))$ if and only if $y_0$ is a minimal point of  $(D_{L_{y_0}})$
\end{corollary}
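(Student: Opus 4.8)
The reverse implication --- that minimality of $y_0$ in $(D_{L_{y_0}})$ forces minimality in $(P(0))$ --- is already supplied by Theorem~\ref{TeoPrinc}~(ii), so the plan is to devote the work to the forward implication and to argue by contradiction, following the very scheme of the proof of Theorem~\ref{TNorm}. The single place where that proof used the bounded-base assumption was to manufacture a functional $h_0\in\mathcal{S}_{Y_+}(y_0)$ that is strictly negative at the offending point $(0,\hat y)$; here such an $h_0$ is handed to us directly, so the task reduces to verifying that the hypothesized $h_0$ performs the same role.

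The key preparatory step, which I would carry out first, is to upgrade the hypothesis ``$h_0(0,y)>0$ on $\mbox{bd}(Y_+)\setminus\{0\}$'' into strict positivity on the whole of $Y_+\setminus\{0\}$. By Lemma~\ref{LSeparatingHyperplane}~(ii) the component $y^*_{h_0}$ satisfies $h_0(0,y)=\langle y^*_{h_0},y\rangle\ge 0$ for every $y\in Y_+$, i.e.\ $y^*_{h_0}$ lies in the dual cone of $Y_+$; and by the Slater qualification (Assumption~\ref{Hip_Slater}) together with Lemma~\ref{LSeparatingHyperplane}~(iii) we have $y^*_{h_0}\neq 0$. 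A nonzero positive functional is automatically strictly positive on $\mbox{Int}\,Y_+$: were $\langle y^*_{h_0},y\rangle=0$ for some $y\in\mbox{Int}\,Y_+$, a small ball about $y$ would sit inside $Y_+$, forcing $\langle y^*_{h_0},w\rangle\ge 0$ for all sufficiently small $w$ and hence $y^*_{h_0}=0$, a contradiction. Since every point of $Y_+\setminus\{0\}$ lies in $\mbox{Int}\,Y_+$ or in $\mbox{bd}(Y_+)\setminus\{0\}$, combining this observation with the hypothesis yields $h_0(0,y)>0$ for all $y\in Y_+\setminus\{0\}$.

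With this strict positivity secured, the contradiction is assembled exactly as in Theorem~\ref{TNorm}. Assuming $y_0$ is minimal for $(P(0))$ but not for $(D_{L_{y_0}})$, I would pick $\bar x\in\Omega$ and $\bar y\in L_{y_0}(g(\bar x))$ with $f(\bar x)+\bar y\in y_0-Y_+\setminus\{0\}$, set $\hat y:=f(\bar x)+\bar y-y_0\in -Y_+\setminus\{0\}$, and, using $\mbox{Graph}(L_{y_0})\subseteq\mbox{Graph}(L_{h_0})$, decompose $\bar y=y'+y^+$ with $y'\in\bar L_{h_0}(g(\bar x))$ and $y^+\in Y_+$. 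Writing $(0,\hat y)=(g(\bar x),f(\bar x))-(g(\bar x),-y')+(0,y^+)-(0,y_0)$ and invoking $(g(\bar x),f(\bar x))\in\mbox{Graph}(W_{Y_+})$, $h_0(g(\bar x),-y')=0$, and $h_0(0,y^+)\ge 0$, I obtain $h_0(0,\hat y)\ge 0$. But $-\hat y\in Y_+\setminus\{0\}$, so the strict positivity just established gives $h_0(0,\hat y)=-h_0(0,-\hat y)<0$, the desired contradiction.

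The one genuinely new ingredient, and the step I expect to be the crux, is this interior-extension argument: it is precisely the Slater qualification that makes $y^*_{h_0}\neq 0$ and thereby promotes the nonstrict positivity available on all of $Y_+$ to strict positivity on $\mbox{Int}\,Y_+$, closing the gap left open by a boundary-only hypothesis. Everything downstream is a faithful replay of the separation bookkeeping already performed in Theorem~\ref{TNorm}.
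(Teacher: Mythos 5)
Your proof is correct, and it handles the crux in a genuinely different way from the paper. The paper's own proof stays entirely on the primal side: since $y_0$ is minimal for $(P(0))$, Theorem~\ref{TeoPrinc}~(i) makes $y_0$ a \emph{weak} minimal point of $(D_{L_{y_0}})$, which localizes the offending element to the boundary, $\hat{y}\in \text{bd}\,(-Y_+)\setminus\{0\}$; the boundary-only hypothesis on $h_0$ then applies verbatim to $-\hat{y}$ and the contradiction $h_0(0,\hat y)<0$ versus $h_0(0,\hat y)\ge 0$ follows exactly as in Theorem~\ref{TNorm}. You instead work on the dual side: you never invoke weak minimality to place $\hat y$ on the boundary, but upgrade the functional itself, using Lemma~\ref{LSeparatingHyperplane}~(ii) and~(iii) (the latter via the Slater condition, Assumption~\ref{Hip_Slater}) to show $y^*_{h_0}\neq 0$ is a positive functional, hence strictly positive on $\mbox{Int}\,Y_+$ by the standard small-ball argument, and therefore $h_0(0,\cdot)>0$ on all of $Y_+\setminus\{0\}$, which covers an arbitrary $\hat y\in -Y_+\setminus\{0\}$. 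Both routes are sound; the paper's is shorter because it reuses Theorem~\ref{TeoPrinc}~(i), while yours is slightly more self-contained (it does not need Proposition~\ref{PropLalfa} or the weak-minimality step) and yields the mildly stronger by-product that any $h_0$ satisfying the hypothesis has $y^*_{h_0}\in Y_+^{+i}$, connecting the corollary's hypothesis to the positive proper efficiency machinery of Section~\ref{Sec_Proper_Efficiency}.
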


\begin{proof}
We follow the notation of the proof of Theorem \ref{TNorm} and, same as there, we suppose that $y_0$ is a minimal point of $(P(0))$ but  not a minimal point of $(D_{L_{y_0}})$. We consider again the element  $\hat{y} \in -Y_+ \setminus \{ 0 \}$. In fact, $\hat{y} \in  \text{bd} (-Y_+)$ because $y_0$ is a weak minimal point of $(D_{L_{y_0}})$. Now, the argument used in the proof of Theorem \ref{TNorm} works again by $h_0$ in the statement of this result. 
\end{proof}

\subsection{Sufficient Conditions for some Proper Minimal Points}\label{Sec_Proper_Efficiency}
We  finish this section making use of Corollary \ref{CorPrinc} to prove  Theorem \ref{ThPrEf}. On that, we see how  some types of proper minimal points of program $(P(0))$ become proper minimal points of the program $(D_{L_{y_0}})$. Therefore, restricting our approach to these types of proper minimal points we avoid certain types of anomalous situations and the two assertions in Theorem \ref{TeoPrinc} become equivalences.
 

Next, we will define the four types of proper minimal points we will deal with. Throughout the reminder of this section the order cone $Y_+$ of $Y$ is assumed to be pointed (i.e., $Y_+ \cap (-Y_+)=\{0\}$). By $Y_+^{+i}$ we denote the set of elements $f \in Y^*$ such that $f(y_+)>0$, for all $ y_+ \in Y_+\setminus \{0\}$. Let us recall that $Y_+$ has a base if and only if $Y_+^{+i}\not = \emptyset$ (see, for example \cite{Jahn2004}). We denote by cone($A$) the cone generated by the set $A$, i.e., cone($A$)$:=\{ta\colon t \geq 0,\, a \in A\}$, by $\overline{\mbox{cone}}(A)$ the closure of the cone generated by the set $A$, and by $B_{Y}$ the closed unit ball of $Y$.
\begin{definition} Let $Y$ be an ordered normed space having a pointed order cone. We say that $\bar{y} \in A\subset Y$ is a: 
\begin{itemize}
\item[(i)] positive properly efficient point  of $A$, written $\bar{y}\in $ Pos($A$), if there exists $f \in Y_+^{+i}$ such that $f(\bar{y})\leq f(y)$ for all  $ y \in A$. 
\item[(ii)]  Henig global properly efficient point  of $A$, written $\bar{y}\in $ GHe($A$), if there exists a pointed cone $\mathcal{K}$ satisfying $Y_+\setminus \{0\} \subseteq \mbox{Int} \,\mathcal{K}$ and
$(A-\bar{y})\cap(-\mbox{Int} \,\mathcal{K}) = \emptyset.$
\item[(iii)] Henig properly efficient point  of $A$, written $\bar{y}\in $ He($A$), if for some base $\Theta$ of $Y_+$ there is a scalar $\varepsilon > 0$ such that $\overline{\mbox{cone}}(A-\bar{y})\cap(- \overline{\mbox{cone}}(\Theta+\varepsilon B_{Y})) = \{ 0 \}.$
\item[(iv)]  super efficient point  of $A$, written $\bar{y}\in $ SE($A$), if there is a scalar $\rho > 0$ such that $\overline{\mbox{cone}}(A-\bar{y})\cap (B_{Y}-Y_+) \subset\rho B_{Y}.$
\end{itemize}
\end{definition}
We have the following: Pos($A$) $\subset$ GHe($A$), SE($A$) $\subset$ Min($A$) $\cap$ GHe($A$) $\cap$ He($A$), and 
SE($A$) = He($A$) when $Y_+$ has a bounded base (see, for example, \cite{Ha2010}). 

Next, we define concepts of proper solution and proper minimal point  corresponding to the former notions for programs $(P(0))$ and $(D_{L})$. 
We say that $x_0\in \Omega$ is a positive properly (resp. Hening global properly, Hening properly, super efficient) solution of $(P(0))$ if $f(x_0)\in$ Pos$(\{f(x) \colon  x \in \Omega,\ g(x) \leq 0 \})$ (resp. $f(x_0)\in$ GHe$(\{f(x) \colon  x \in \Omega,\ g(x) \leq 0 \})$, $f(x_0)\in$ He$(\{f(x) \colon  x \in \Omega,\ g(x) \leq 0 \}  )$, $f(x_0)\in$ SE$(\{f(x) \colon  x \in \Omega,\ g(x) \leq 0 \}  )$).  In this case we say  that $f(x_0)$ is a positive (resp. Hening global, Hening, super efficient) minimal point of $(P(0))$. 
Analogously, we say that $x_0\in \Omega$ is a positive properly (resp. Hening global  properly, Hening  properly, super efficient) solution of $(D_{L})$ if $f(x_0)\in$ Pos$(\{f(x)+L(g(x)) \colon  x \in \Omega\}  )$ (resp. $f(x_0)\in$ GHe$(\{f(x)+L(g(x)) \colon  x \in \Omega\}  )$, $f(x_0)\in$ He$(\{f(x)+L(g(x)) \colon  x \in \Omega\}  )$, $f(x_0)\in$ SE$(\{f(x)+L(g(x)) \colon  x \in \Omega\}  )$). 
In this case we say  that $f(x_0)$ is a positive (resp. Hening global, Hening, super efficient) minimal point of $(D_{L})$.

The following theorem claims that each one of the former proper minimal points is, in fact, an ``ordinary'' minimal point with respect to the order induced by some ``particular open'' order cone. This result together with Corollary \ref{CorPrinc} above are the base of the proof of our next Theorem. In the following statement, Min($A\mid \mathcal{K}$) stands the set of minimal points of the set $A \subset Y$ respect to an arbitrary cone $\mathcal{K} \subset Y$ (maybe different to the order cone $Y_+$), i.e., for every  $a \in A$, we have $a \in$ Min($A\mid \mathcal{K}$) if and only if $A\cap (a-\mathcal{K})\subset a + \mathcal{K}$. 

\begin{theorem}(\cite[Theorem 21.7]{Ha2010})\label{thequiv}
Let $Y$ be an ordered normed space having a pointed order cone and $\bar{y} \in A\subset Y$. The following statements hold true:
\begin{itemize}
\item[(i)]  $\bar{y} \in Pos(A )$ if and only if there exists $f \in Y_+^{+i}$ such that defining $Q:=\{ y \in Y : f(y)>0 \}$, we have $\bar{y} \in \mbox{Min}(A\mid Q \cup \{0\})$.
\item[(ii)]  $\bar{y} \in GHe(A )$ if and only if there exists a pointed cone $\mathcal{K}$ such that $Y_+\setminus \{0\}\subseteq \mbox{Int } \mathcal{K}$ and $\bar{y} \in Min(A\mid  \mbox{Int } \mathcal{K} \cup \{0\})$.
\item[(iii)] $\bar{y} \in He(A )$ if and only if there exists a base $\Theta $ of $Y_+ $ such that  for some scalar $ \eta $ satisfying $ 0 < \eta < inf\{ \Vert \theta \Vert : \theta \in \Theta\}$ if we define $\mathcal{K}:=cone(\Theta +\eta \, \mbox{Int} \, B_Y)$, then $\bar{y} \in Min(A\mid \mathcal{K})$.
\item[(iv)]  Assume that $Y_+ $ has a bounded base $\Theta $. Then $\bar{y} \in SE(A )$ if and only if for some scalar $ \eta $ satisfying   $ 0 < \eta < inf\{ \Vert \theta \Vert : \theta \in \Theta\} $ if we define $\mathcal{K}:=cone(\Theta +\eta \, \mbox{Int} \, B_Y)$, then $\bar{y} \in Min(A\mid \mathcal{K})$.
\end{itemize}
\end{theorem}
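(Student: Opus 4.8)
The plan is to reduce all four equivalences to a single elementary ``domination'' principle and then, for the Henig and super efficient cases, to control the passage between open and closed dilating cones. First I would isolate the following domination lemma: if $C\subseteq Y$ satisfies $0\in C$ and $C\cap(-C)=\{0\}$, then for $\bar{y}\in A$ one has $\bar{y}\in\mbox{Min}(A\mid C)$ if and only if $(A-\bar{y})\cap(-(C\setminus\{0\}))=\emptyset$. Both implications are one-line set manipulations: a putative $a\in A$ with $\bar{y}-a\in C\setminus\{0\}$ would, by minimality, force $a-\bar{y}\in C$, hence $a-\bar{y}\in C\cap(-C)=\{0\}$, contradicting $\bar{y}-a\ne 0$; conversely, emptiness of the intersection makes $A\cap(\bar{y}-C)$ reduce to $\{\bar{y}\}\subseteq\bar{y}+C$.

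Parts (i) and (ii) then follow immediately with the very same witness. For (i) I take $C=Q\cup\{0\}$ with $Q=\{y\colon f(y)>0\}$; since $f$ is linear, $Q$ is a cone with $Q\cap(-Q)=\emptyset$, and the lemma turns $\bar{y}\in\mbox{Min}(A\mid Q\cup\{0\})$ into $f(\bar{y}-y)\le 0$ for all $y\in A$, i.e.\ $f(\bar{y})\le f(y)$, which is exactly $\mbox{Pos}(A)$ for the same $f\in Y_+^{+i}$. For (ii) I take $C=\mbox{Int}\,\mathcal{K}\cup\{0\}$; pointedness of $\mathcal{K}$ gives $\mbox{Int}\,\mathcal{K}\cap(-\mbox{Int}\,\mathcal{K})=\emptyset$ and $0\notin\mbox{Int}\,\mathcal{K}$, so the lemma converts the minimality into $(A-\bar{y})\cap(-\mbox{Int}\,\mathcal{K})=\emptyset$, which is the definition of $\mbox{GHe}(A)$ with the same cone $\mathcal{K}$.

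For (iii) I would first record the geometry of the dilating cone $\mathcal{K}_\eta:=\mbox{cone}(\Theta+\eta\,\mbox{Int}\,B_Y)$ for $0<\eta<\inf\{\|\theta\|\colon\theta\in\Theta\}$: this choice of $\eta$ forces $\Theta+\eta B_Y$ to be uniformly bounded away from the origin, a strict separation then yields $f\in Y^*$ positive on it, whence $\mathcal{K}_\eta$ is a pointed convex cone whose nonzero part $\mathcal{K}_\eta\setminus\{0\}=\mbox{Int}\,\mathcal{K}_\eta$ is open and contains $Y_+\setminus\{0\}$. Applying the domination lemma with $C=\mathcal{K}_\eta$ reduces $\bar{y}\in\mbox{Min}(A\mid\mathcal{K}_\eta)$ to $(A-\bar{y})\cap(-\mbox{Int}\,\mathcal{K}_\eta)=\emptyset$, so it only remains to match this with $\overline{\mbox{cone}}(A-\bar{y})\cap(-\overline{\mbox{cone}}(\Theta+\varepsilon B_Y))=\{0\}$ from the definition of $\mbox{He}(A)$. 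In the direction $\mbox{He}\Rightarrow\mbox{Min}$ I choose $\eta\le\varepsilon$; then $\mathcal{K}_\eta\subseteq\overline{\mbox{cone}}(\Theta+\varepsilon B_Y)$ trivially, and any point of $(A-\bar{y})\cap(-\mbox{Int}\,\mathcal{K}_\eta)$ would be a nonzero common point of the two closed cones, contradicting $\mbox{He}(A)$. The delicate direction, which I expect to be the main obstacle, is $\mbox{Min}\Rightarrow\mbox{He}$: here I must pass from ``the open cone $\mathcal{K}_\eta$ misses $A-\bar{y}$'' to ``the closed cone meets $\overline{\mbox{cone}}(A-\bar{y})$ only at $0$'', and taking closures can a priori create new intersection points. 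The key auxiliary fact is the interleaving of dilating cones: for a suitable $\varepsilon<\eta$ one has $\overline{\mbox{cone}}(\Theta+\varepsilon B_Y)\setminus\{0\}\subseteq\mbox{Int}\,\mathcal{K}_\eta$. Granting this, a nonzero $w\in\overline{\mbox{cone}}(A-\bar{y})\cap(-\overline{\mbox{cone}}(\Theta+\varepsilon B_Y))$ would lie in the open cone $-\mbox{Int}\,\mathcal{K}_\eta$; approximating $w$ by elements $t_n(a_n-\bar{y})\in\mbox{cone}(A-\bar{y})$ and using that $-\mbox{Int}\,\mathcal{K}_\eta$ is an open cone, some $a_n-\bar{y}$ would fall in $-\mbox{Int}\,\mathcal{K}_\eta$, against minimality. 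Establishing this interleaving inclusion is the real work: it rests on the uniform gap $\inf\|\theta\|-\eta>0$ together with a strict-separation and normalization argument, and it is exactly the point where one must be careful when the base $\Theta$ is unbounded.

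Finally, (iv) I would deduce from (iii). Under the standing hypothesis that $Y_+$ has a bounded base we have the coincidence $\mbox{SE}(A)=\mbox{He}(A)$ recalled just before the statement, and boundedness of the given base $\Theta$ makes the interleaving estimates of (iii) uniform, so the characterization of $\mbox{He}(A)$ through $\mbox{Min}(A\mid\mathcal{K}_\eta)$ for that same bounded base $\Theta$ transfers verbatim to $\mbox{SE}(A)$, yielding the asserted equivalence.
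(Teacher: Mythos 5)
A preliminary remark: the paper offers no proof of this theorem at all — it is imported wholesale from \cite[Theorem 21.7]{Ha2010} — so your attempt can only be measured against the cited literature and on its own merits. Your domination lemma is correct and disposes of (i) and (ii) cleanly; the reduction of $\mbox{Min}(A\mid Q\cup\{0\})$ to $f(\bar y)\le f(y)$ and of $\mbox{Min}(A\mid\mbox{Int}\,\mathcal{K}\cup\{0\})$ to $(A-\bar y)\cap(-\mbox{Int}\,\mathcal{K})=\emptyset$ is exactly right, as is the observation that $\mathcal{K}_\eta\setminus\{0\}=\mbox{Int}\,\mathcal{K}_\eta$ is open, pointed by separation, and dilates $Y_+\setminus\{0\}$. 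The genuine gap is precisely the step you flag but do not carry out in (iii): the interleaving inclusion $\overline{\mbox{cone}}(\Theta+\varepsilon B_Y)\setminus\{0\}\subseteq\mbox{Int}\,\mathcal{K}_\eta$ for $\varepsilon<\eta$. Your proposed mechanism (``uniform gap plus strict separation and normalization'') cannot prove it. Write a nonzero $w=\lim_n t_n(\theta_n+\varepsilon b_n)$; the dangerous regime is $t_n\to 0$ with $\|\theta_n\|\to\infty$ (possible exactly when $\Theta$ is unbounded), in which case $t_n\theta_n\to w$ and normalization degenerates: a separating functional $f$ bounded below on $\Theta$ gives no control of $t_nf(\theta_n)$, and all you learn is $w\in\overline{Y_+}$. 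If $Y_+$ is \emph{closed} the step is rescued by a case split rather than separation: either $t_n$ is bounded below along a subsequence, and then $w=t_n\bigl(\theta_n+\varepsilon b_n+(w-u_n)/t_n\bigr)$ lands in $\mbox{cone}(\Theta+\eta\,\mbox{Int}\,B_Y)$ once $\|w-u_n\|<\tfrac12(\eta-\varepsilon)t_n$; or $t_n\to0$, and then $w\in\overline{Y_+}=Y_+$, so $w\in\mbox{cone}(\Theta)\setminus\{0\}\subseteq\mathcal{K}_\eta$. Without closedness of $Y_+$ — and the paper's standing hypotheses never assume it; it belongs to Ha's setting — the inclusion is simply false: in $Y=\mathbb{R}^2$ take $Y_+=\{(u,v):v>0\}\cup\{(0,0)\}$ with unbounded base $\Theta=\{(u,1):u\in\mathbb{R}\}$; then $\overline{\mbox{cone}}(\Theta+\varepsilon B_Y)$ is the closed upper half-plane while $\mathcal{K}_\eta$ is the open upper half-plane with the origin adjoined, and $(1,0)$ violates the inclusion. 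So you must either import the closedness hypothesis explicitly or replace the separation argument by the dichotomy above.

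A second, smaller defect sits in (iv). Statement (iii) quantifies the base existentially, while (iv) pins down the \emph{given} bounded base $\Theta$; your phrase ``transfers verbatim'' hides the direction $\mbox{SE}(A)\Rightarrow\bar y\in\mbox{Min}(A\mid\mathcal{K}_\eta)$ for that particular $\Theta$, which does not follow from (iii) plus $\mbox{SE}=\mbox{He}$, since He only hands you \emph{some} base. The clean repair is a direct estimate: if $a-\bar y=-t(\theta+\eta b)$ with $t>0$, $\theta\in\Theta$, $\|b\|<1$, then $(a-\bar y)/(t\eta)\in(B_Y-Y_+)\cap\overline{\mbox{cone}}(A-\bar y)$, so super efficiency gives $\|a-\bar y\|\le\rho\eta t$, while $\|a-\bar y\|\ge(m-\eta)t$ with $m=\inf\{\|\theta\|:\theta\in\Theta\}$; choosing $\eta<m/(1+\rho)$ makes the two bounds incompatible, yielding the required minimality with respect to the fixed base. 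With the closedness hypothesis restored and these two patches inserted, your overall architecture (one domination lemma plus cone interleaving) is sound and is essentially the standard route followed in the cited source.
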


Now, the result relating our Lagrange multiplier with proper minimals.

\begin{theorem} \label{ThPrEf} 
Fix $y_0\in W(0)$,  let $L_{y_0}$ be the Lagrange process of $(P(0))$ at $y_0$, and assume that $Y_+$ is pointed. The following statements hold true.
\begin{itemize}
\item[(i)]  $y_0$ is a positive (resp. Hening global, Hening)  minimal point of $(P(0))$ if and only if $y_0$ is a positive (resp. Hening global, Hening)  minimal point of $(D_{L_{y_0}})$
\item[(ii)] If $Y_+$ has a bounded base, then $y_0$ is a super efficient minimimal point of $(P(0))$ if and only if $y_0$ is a super efficient minimimal point of $(D_{L_{y_0}})$
\end{itemize}
\end{theorem}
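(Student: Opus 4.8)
The plan is to convert each of the four proper-minimality equivalences into a single ordinary minimality statement with respect to an auxiliary cone, and then to quote Corollary~\ref{CorPrinc} for that cone. The conversion is exactly Theorem~\ref{thequiv}: fixing a witness (an $f\in Y_+^{+i}$ in the positive case, a pointed cone in the global Henig case, a base $\Theta$ of $Y_+$ together with a scalar $\eta$ satisfying $0<\eta<\inf\{\|\theta\|:\theta\in\Theta\}$ in the Henig and super efficient cases), that theorem produces a cone $\mathcal{K}\subseteq Y$ for which $y_0$ is the corresponding proper minimal point of a set $A$ if and only if $y_0\in\mathrm{Min}(A\mid\mathcal{K})$. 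Before anything else I would record the three properties shared by all of these cones: $Y_+\subseteq\mathcal{K}$ (because elements of $Y_+^{+i}$ are strictly positive on $Y_+\setminus\{0\}$, because $Y_+\setminus\{0\}\subseteq\mathrm{Int}\,\mathcal{K}$ in the global Henig case, and because $\mathrm{cone}(\Theta)=Y_+$ in the two remaining cases); $\mathcal{K}$ has non-empty interior; and $\mathcal{K}\setminus\{0\}$ is open. The inclusion $Y_+\subseteq\mathcal{K}$ makes $f$ automatically $\mathcal{K}$-convex and leaves the Slater qualification untouched, so the whole development of Sections~\ref{The Set-valued Lagrange Process} and \ref{Sec_Nec_Opt_Condition} may be re-run verbatim with $\mathcal{K}$ as the order cone on $Y$.

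Carrying out that re-run produces the $\mathcal{K}$-Lagrange process $L_{y_0}^{\mathcal{K}}$, and since $\mathcal{K}\setminus\{0\}$ is open, Corollary~\ref{CorPrinc} applied with $\mathcal{K}$ gives that $y_0\in\mathrm{Min}(W(0)\mid\mathcal{K})$ if and only if $y_0\in\mathrm{Min}(\{f(x)+L_{y_0}^{\mathcal{K}}(g(x)):x\in\Omega\}\mid\mathcal{K})$. The one substantial step is to connect $L_{y_0}^{\mathcal{K}}$ with the $Y_+$-process $L_{y_0}$ that actually occurs in the statement. For this I would note that $\mathrm{Graph}(W_{Y_+})\subseteq\mathrm{Graph}(W_{\mathcal{K}})$ and $y_0-Y_+\subseteq y_0-\mathcal{K}$, so every hyperplane separating the two $\mathcal{K}$-sets also separates the two $Y_+$-sets; that is, $\mathcal{S}_{\mathcal{K}}(y_0)\subseteq\mathcal{S}_{Y_+}(y_0)$. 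Together with $L_h\subseteq L_h^{\mathcal{K}}:=\bar{L}_h+\mathcal{K}$ (because $Y_+\subseteq\mathcal{K}$) and the half-space description of Lemma~\ref{LGrafLalfa}(i), this yields $\mathrm{Graph}(L_{y_0})\subseteq\mathrm{Graph}(L_{y_0}^{\mathcal{K}})$, hence $\{f(x)+L_{y_0}(g(x)):x\in\Omega\}\subseteq\{f(x)+L_{y_0}^{\mathcal{K}}(g(x)):x\in\Omega\}$.

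With these inclusions the two implications close quickly. For ``$(P(0))\Rightarrow(D_{L_{y_0}})$'' I fix the witness coming from the proper minimality of $y_0$ in $(P(0))$, so that $y_0\in\mathrm{Min}(W(0)\mid\mathcal{K})$; Corollary~\ref{CorPrinc} gives $y_0\in\mathrm{Min}(\{f(x)+L_{y_0}^{\mathcal{K}}(g(x))\}\mid\mathcal{K})$, and because $\{f(x)+L_{y_0}(g(x))\}$ is a subset of this set containing $y_0$ (Lemma~\ref{Lemma_previo_Tma_principal}(ii) gives $y_0\in W(0)\subseteq\{f(x)+L_{y_0}(g(x))\}$), minimality restricts to the subset; Theorem~\ref{thequiv} with the same witness then returns the proper minimality of $(D_{L_{y_0}})$. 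The converse needs only Lemma~\ref{Lemma_previo_Tma_principal}(ii): fixing now the witness from $(D_{L_{y_0}})$, the relation $W(0)\subseteq\{f(x)+L_{y_0}(g(x))\}$ makes the inclusion $y_0\in\mathrm{Min}(\{f(x)+L_{y_0}(g(x))\}\mid\mathcal{K})$ restrict to $y_0\in\mathrm{Min}(W(0)\mid\mathcal{K})$, and Theorem~\ref{thequiv} translates this back. The same scheme handles all four notions; the super efficient case~(ii) is identical to the Henig case once the bounded base of $Y_+$ is used to invoke Theorem~\ref{thequiv}(iv).

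The main obstacle is precisely the bridge of the second paragraph: the statement is phrased with the $Y_+$-process $L_{y_0}$, whereas Corollary~\ref{CorPrinc} naturally speaks about the $\mathcal{K}$-process $L_{y_0}^{\mathcal{K}}$, and one must verify that these fit together via $\mathcal{S}_{\mathcal{K}}(y_0)\subseteq\mathcal{S}_{Y_+}(y_0)$ and the resulting nesting of dual image sets. A secondary point requiring care is confirming the properties $Y_+\subseteq\mathcal{K}$, $\mathrm{Int}\,\mathcal{K}\neq\emptyset$ and $\mathcal{K}\setminus\{0\}$ open for each $\mathcal{K}$; in the Henig and super efficient cases the openness of $\mathrm{cone}(\Theta+\eta\,\mathrm{Int}\,B_Y)\setminus\{0\}$ relies on the bound on $\eta$, which keeps the origin off $\Theta+\eta\,\mathrm{Int}\,B_Y$.
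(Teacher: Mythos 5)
Your proposal is correct, and its skeleton is exactly the paper's: reduce each proper-minimality notion via Theorem~\ref{thequiv} to ordinary minimality with respect to an auxiliary cone $\mathcal{K}$ whose nonzero part is open, then invoke Corollary~\ref{CorPrinc}, using Lemma~\ref{Lemma_previo_Tma_principal}(ii) for the inclusion $W(0)\subseteq M:=\{f(x)+L_{y_0}(g(x)):x\in\Omega\}$ (and, for super efficiency, the bounded base of $Y_+$, which the paper exploits through the identity $\mathrm{SE}=\mathrm{He}$ while you call Theorem~\ref{thequiv}(iv) directly --- an immaterial difference). Where you genuinely go beyond the paper is in the hard direction: the paper applies Corollary~\ref{CorPrinc} with the order cone replaced by $\mathcal{K}$ but to the set $M$ built from the \emph{original} $Y_+$-process $L_{y_0}$, silently identifying that corollary's dual program with $(D_{L_{y_0}})$, whereas the corollary, re-read with $\mathcal{K}$ as the order cone, speaks about the $\mathcal{K}$-Lagrange process $L_{y_0}^{\mathcal{K}}$. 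Your bridge --- the inclusion $\mathcal{S}_{\mathcal{K}}(y_0)\subseteq\mathcal{S}_{Y_+}(y_0)$ (from $\mathrm{Graph}(W_{Y_+})\subseteq\mathrm{Graph}(W_{\mathcal{K}})$ and $y_0-Y_+\subseteq y_0-\mathcal{K}$), hence $\mathrm{Graph}(L_{y_0})\subseteq\mathrm{Graph}(L_{y_0}^{\mathcal{K}})$ by the half-space description of Lemma~\ref{LGrafLalfa}(i), followed by the squeeze $y_0\in W(0)\subseteq M\subseteq M^{\mathcal{K}}$ so that minimality over $M^{\mathcal{K}}$ restricts to $M$ --- fills exactly this gap, and your preliminary verifications ($Y_+\subseteq\mathcal{K}$ so $f$ stays $\mathcal{K}$-convex, $\mathrm{Int}\,\mathcal{K}\neq\emptyset$, $\mathcal{K}\setminus\{0\}$ open via the bound $0<\eta<\inf\{\Vert\theta\Vert:\theta\in\Theta\}$, Slater untouched) are precisely what legitimizes re-running Sections~\ref{The Set-valued Lagrange Process} and \ref{Sec_Nec_Opt_Condition} with $\mathcal{K}$. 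You also correctly note that the converse implication needs no cone-swap at all, only $W(0)\subseteq M$; in short, your write-up is a more rigorous rendering of the paper's argument rather than a different proof, and what it buys is an explicit justification of the step the paper compresses into ``applying Corollary~\ref{CorPrinc}.''
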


\begin{proof}  
Let us denote $M: =  \{ f(x)+ L_{y_0}(g(x)): x \in \Omega \}$. By Lemma \ref{Lemma_previo_Tma_principal} (ii), we have that $W(0) \subseteq M$. 

The proof of statement (i) falls naturally into three parts, one for each type of proper minimal.
\begin{itemize}
\item[(a)] Case of positive minimal point. Let $y_0$ be a positive minimal point of $(D_{L_{y_0}})$. By Theorem~\ref{thequiv}, $y_0 \in$ Min$(M \mid Q \cup \{0\})$ with $Q=\{y \in Y:f(y)>0\} $, for some $f \in Y_+^{+i}.$ Since $Q$ is open, applying Corollary \ref{CorPrinc}, $y_0 \in$ Min$(W(0) \mid Q \cup \{0\})$. Thus, again by Theorem~\ref{thequiv},  $y_0$ is a positive minimal point of $(P(0))$. Since the results used in our argument above are equivalences, the proof of this part is over.
\item[(b)] Case of Hening global minimal point. Let $y_0$ be a Hening global minimal point of $(D_{L_{y_0}})$. By Theorem \ref{thequiv}, there exists a pointed cone $\mathcal{K}$  with $Y_+\setminus \{0\} \subseteq \mbox{Int} \,\mathcal{K} $ such that $(M-y_0)\cap(-\mbox{Int} \, \mathcal{K}) = \emptyset,$ that is, $y_0 \in$ Min$(M \mid \mbox{Int} \, \mathcal{K} \cup \{ 0 \} ) .$ Applying Corollary \ref{CorPrinc}, $y_0 \in$ Min$(W(0) \mid \mbox{Int} \,\mathcal{K}  \cup \{0\})$. Consequently,   $y_0$ is a Hening global minimal point of $(P(0))$. The results used in our argument are again equivalences.
\item[(c)] Case of Hening minimal point. Let $y_0$ be a Hening  minimal point of $(D_{L_{y_0}})$. By Theorem~\ref{thequiv},  $y_0 \in$ Min$(M \mid Q \cup \{0\})$ with $Q=\mbox{cone}(\Theta +\eta \, \mbox{Int} \, B_Y)$, being $\Theta $ a base of $Y_+ $ and $ \eta $ some scalar satisfying $ 0 < \eta < inf\{ \Vert \theta \Vert : \theta \in \Theta\} .$ Since $Q\setminus \{0\}$ is open, by Corollary~\ref{CorPrinc}, $y_0 \in$ Min$(W(0) \mid Q \cup \{0\})$. Then, by Theorem \ref{thequiv}, $y_0$ is a Hening  minimal point of $(P(0))$.  The results used in our argument are again equivalences.
\end{itemize}
(ii) is a consequence of case (c) above and the equality SE($A $)=He($A $) when $Y_+$ has a bounded base (see \cite{Ha2010}). 
\end{proof}

\section{The Set Optimization Version}\label{Section_Case_Set_Valued}
In this section  we extend the results obtained in the previous sections to the following set-valued optimization problem 
\[
\text{  Min \,} F(x) \text{ \  such that \ } x\in \Omega, \text{ \ } G(x) \cap \left( z-Z_+ \right) \neq \emptyset, \text{ \ \ \ \ \ \ } (\mathcal{P}(z))
\] 
where $\Omega$ is a convex subset of $X$, and  $F:\Omega \rightrightarrows Y$ and $G:\Omega \rightrightarrows Z$ are two set-valued maps such that $F$ is  $Y_+$-convex and $G$ is $Z_+$-convex.
Note that if $G$ is single-valued, the constraint in $(\mathcal{P}(z))$ reduces to $G(x) \in z-Z_+ $ (equiv. $G(x) \leq z$), generalizing the inequalities constraints in $(P(z))$. If, in addition, $F$ is single-valued, then $(\mathcal{P}(z))$ becomes  the conventional convex vector optimization problem $(P(z))$. 
As a consequence, program $(\mathcal{P}(z))$ can be seen as a set-valued generalization of program $(P(z))$. 
For program $(\mathcal{P}(z))$, the graph of the feasible set-valued map in the objective space remains $Y_+$-convex. Hence all the results obtained in the preceding sections regarding to problem $(P(z))$  have their analogous for the set-valued program  $(\mathcal{P}(z))$. 
Their proofs are mimetic adaptations of the previous ones and so we will omit them.  However, we consider worthwhile to show the main definitions and results for the new set-valued program  $(\mathcal{P}(z))$.
In this framework, fixed $z\in Z$ and adapting the notation $\Lambda (z):=\{x \in \Omega\colon  G(x) \cap \left( z-Z_+ \right) \neq \emptyset \}$, $W(z):=\bigcup_{x \in \Lambda (z)}F(x) \subset Y$, we say that a pair $(x_z,y_z)\in \Lambda (z)\times W(z)$ with $y_z \in F(x_z) $  is a minimizer of the program $(\mathcal{P}(z))$ if $y_z\in$ Min$(W(z))$; in this case $y_z$ is said to be a minimal point of $(\mathcal{P}(z))$. Analogously, a pair $(x_z,y_z)\in \Lambda (z)\times W(z)$ with $y_z \in F(x_z) $   is called a  weak (resp. positive properly, Hening global  properly, Hening  properly, super efficient) minimizer of the program $(\mathcal{P}(z))$ if $y_z\in$ WMin$(W(z))$ (resp. Pos$(W(z))$, GHe$(W(z))$,  He$(W(z))$, SE$(W(z))$); in this case $y_z$ is said to be a weak (resp. positive properly, Hening global  properly, Hening  properly, super efficient) minimal point of $(\mathcal{P}(z))$.


Next, we will define the Lagrange multiplier for $(\mathcal{P}(0))$ adapting the procedure followed to get  Definition~\ref{LagrProc}. For every  point $y_0 \in W(0)$ we  define the set $S_{Y_+}(y_0)$ following again equality (\ref{defi_S_Y}). This set enjoys the same properties proved in Section~\ref{The Set-valued Lagrange Process}. Now, for every $h \in S_{Y_+}(y_0)$, we define the corresponding process adapting Definition~\ref{DL} and denoting it by  $\mathcal{L}_h$. 
\begin{definition} \label{LagrProc_Set_Val} 
Fix $y_0 \in W(0)$. We define the Lagrange process of $(\mathcal{P}(0))$ at $y_0$ as
the closed convex process  $\mathcal{L}_{y_0}:Z\rightrightarrows Y $  such that
\begin{equation*}
Graph(\mathcal{L}_{y_0}):=\left\lbrace 
\begin{array}{lll}
\displaystyle \bigcap_{h \in S_{Y_+}(y_0)}{Graph(\mathcal{L}_h)}, & \text{ if } & \mathcal{S}_{Y_+}(y_0) \not= \emptyset, \\
& & \\
\displaystyle Z \times Y, & \text{ if } & \mathcal{S}_{Y_+}(y_0) = \emptyset.
\end{array}
\right. 
\end{equation*}
\end{definition}

\noindent For this new Lagrangian process the following rule of Lagrage multipliers holds. 
\begin{theorem} 
Let $X$, $Y$ and $Z$ be normed spaces such that $Y$ and $Z$ are ordered  by the corresponding cones $Y_+$ and $Z_+$,  both having non empty interior. Take a convex set $\Omega \subset X$, set-valued maps $F:\Omega \rightrightarrows Y$ and $G:\Omega\rightrightarrows Z$ such that $F$ is $Y_+$-convex and  $G$ is $Z_+$-convex.  Assume the existence of a point $x_1 \in \Omega$ for which $-G(x_1)\cap \mbox{Int } Z_+\not = \emptyset$. Then for every $y_0\in F(x_0)\subset  F(\Omega)$ such that $G(x_0) \cap \left( - Z_+ \right) \neq \emptyset$, there exists a closed convex process  $\mathcal{L}_{y_0}:Z\rightrightarrows Y$ such that if $y_0$ is a minimal point of the program
\[
\text{  Min \,} F(x) \text{ \  such that \ } x\in \Omega, \text{ \ } G(x) \cap \left( -Z_+ \right) \neq \emptyset, \text{ \ \ \ \ \ \ } (\mathcal{P}(0))
\]
then $y_0$ is a weak minimal point of the program
\[
\text{  Min } F(x) + \mathcal{L}_{y_0}(G(x)) \text{ such that } x\in \Omega. \hspace{3cm } (\mathcal{D}_{\mathcal{L}_{y_0}})  
\]
In addition, we have the following
\begin{itemize}
\item[(i)]  $y_0$ is a minimal point of $(\mathcal{P}(0))$ if and only if $y_0$ is a minimal point of $(\mathcal{D}_{\mathcal{L}_{y_0}})$, provided  either $Y_+ \setminus \left\lbrace 0 \right\rbrace $ is open or the cone Graph($\mathcal{L}_{y_0}$) has a bounded base.  
\item[(ii)] $y_0$ is a positive (resp. Hening global, Hening) minimal point of $(\mathcal{P}(0))$ if and only if $y_0$ is a positive (resp. Hening global, Hening) minimal point of  $(\mathcal{D}_{\mathcal{L}_{y_0}})$,  provided $Y_+$ is pointed,
\item[(iii)]  $y_0$ is a super efficient point of $(\mathcal{P}(0))$ if and only if $y_0$ is a super efficient minimal point of  $(\mathcal{D}_{\mathcal{L}_{y_0}})$, provided $Y_+$ has a bounded base.
\end{itemize}
\end{theorem}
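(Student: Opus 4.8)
The plan is to prove this final set-valued theorem by reducing it to the already-established results for the single-valued program $(P(0))$, exactly as the authors indicate when they say the proofs are ``mimetic adaptations'' which they omit. The key structural observation is the one the paper itself isolates at the start of Section~\ref{Section_Case_Set_Valued}: for program $(\mathcal{P}(0))$, the graph of the upper-image set-valued map $W_{Y_+}$ in the objective space remains $Y_+$-convex. Concretely, I would first verify the analogue of Lemma~\ref{LWConv}, namely that
\[
\mbox{Graph}(W_{Y_+})=\left\lbrace (z+z_+, \, y+y_+) : x \in \Omega,\ y \in F(x),\ z \in G(x),\ z_+ \in Z_+,\ y_+ \in Y_+ \right\rbrace
\]
is convex and upward-closed in the product cone $Z_+\times Y_+$. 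Convexity follows from the $Y_+$-convexity of $F$ together with the $Z_+$-convexity of $G$ and the form of the feasibility constraint $G(x)\cap(z-Z_+)\neq\emptyset$; the upward-closure property is immediate from the defining formula. Once this is in hand, every downstream object---the separating set $\mathcal{S}_{Y_+}(y_0)$, the processes $\bar{\mathcal{L}}_h$ and $\mathcal{L}_h$, and the Lagrange process $\mathcal{L}_{y_0}$ of Definition~\ref{LagrProc_Set_Val}---is defined purely in terms of $\mbox{Graph}(W_{Y_+})$ and $(0,y_0)$, so its properties transfer verbatim.

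Next I would replay the chain of results in the order they were proved for the single-valued case. The Slater-type hypothesis ``$-G(x_1)\cap\mbox{Int}\,Z_+\neq\emptyset$'' plays exactly the role of Assumption~\ref{Hip_Slater}: picking any $z_1\in G(x_1)$ with $-z_1\in\mbox{Int}\,Z_+$ gives a feasible point witnessing $g(x_1)<0$ in the scalar argument, which is all that Lemma~\ref{LSeparatingHyperplane}~(iii) uses to guarantee $y^*_h\neq 0$ for every $h\in\mathcal{S}_{Y_+}(y_0)$. With that, the analogues of Lemma~\ref{LSeparatingHyperplane}, Lemma~\ref{LGrafLalfa}, and Lemma~\ref{Lemma_previo_Tma_principal} hold with identical proofs, the only change being that wherever the single-valued argument writes $(g(x),f(x))\in\mbox{Graph}(W_{Y_+})$ one now writes $(z,y)\in\mbox{Graph}(W_{Y_+})$ for a chosen $z\in G(x)$ and $y\in F(x)$. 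In particular the key inclusion $W(0)\subseteq\{F(x)+\mathcal{L}_{y_0}(G(x)):x\in\Omega\}$ (the set-valued form of Lemma~\ref{Lemma_previo_Tma_principal}~(ii)) goes through because $G(x)\cap(-Z_+)\neq\emptyset$ forces $0\in\mathcal{L}_{y_0}(z)$ for some $z\in G(x)$, by the analogue of Lemma~\ref{Lemma_previo_Tma_principal}~(i).

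The main Lagrange rule (weak minimality of $y_0$ for $(\mathcal{D}_{\mathcal{L}_{y_0}})$ whenever $y_0$ minimizes $(\mathcal{P}(0))$) then follows from the set-valued counterparts of Proposition~\ref{PropLalfa} and Theorem~\ref{TeoPrinc}~(i), while the converse sufficiency direction uses the counterpart of Theorem~\ref{TeoPrinc}~(ii). Statement~(i) is obtained by combining the open-cone case (the analogue of Corollary~\ref{CorPrinc}) with the bounded-base case (the analogue of Theorem~\ref{TNorm}); the latter again invokes \cite[Theorem~1.1]{GARCIACASTANO20151178} to produce a separating functional $h_0\in\mathcal{S}_{Y_+}(y_0)$ strictly negative on the offending direction. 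Finally, statements~(ii) and~(iii) follow from the set-valued analogue of Theorem~\ref{ThPrEf}, which itself rests on Theorem~\ref{thequiv} (a purely set-theoretic characterization of the four proper-efficiency notions, independent of whether $F,G$ are single- or set-valued) together with the open-cone form of the Lagrange rule applied to the auxiliary order cones $Q$ or $\mbox{Int}\,\mathcal{K}$.

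The step I expect to be the genuine obstacle---rather than mere bookkeeping---is establishing the convexity of $\mbox{Graph}(W_{Y_+})$ for the set-valued problem. In the single-valued case this was delegated cleanly to \cite[Lemma~8.1.15]{Zalinescu2015}, but here the feasibility map $\Lambda(z)=\{x:G(x)\cap(z-Z_+)\neq\emptyset\}$ mixes the $Z_+$-convexity of $G$ with the constraint geometry, and one must check that the epigraph-style set defined by $F$ over this variable feasible region is still convex. The cleanest route is to observe that $(z,y)\in\mbox{Graph}(W_{Y_+})$ if and only if there exists $x\in\Omega$ with $y\in F(x)+Y_+$ and $z\in G(x)+Z_+$, i.e.\ $\mbox{Graph}(W_{Y_+})$ is the image under the linear map $(x,z',y')\mapsto(z',y')$ of the convex set $\{(x,z',y'):x\in\Omega,\ (z',y')\in(G(x)+Z_+)\times(F(x)+Y_+)\}$, whose convexity is exactly the joint $(Z_+\times Y_+)$-convexity of the map $x\mapsto F(x)\times G(x)$; this reduces everything to the assumed convexity notions and the fact that images of convex sets under linear maps are convex. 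Once convexity is secured, the remainder is, as the authors assert, a faithful transcription of the arguments already given.
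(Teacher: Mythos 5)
Your proposal is correct and takes essentially the same route the paper intends: the authors omit this proof precisely because it is a ``mimetic adaptation'' of Sections 3--5, and your reduction replays that chain (Slater witness $z_1\in G(x_1)$ with $-z_1\in\mbox{Int}\,Z_+$, the transferred lemmas, then the analogues of Theorem \ref{TeoPrinc}, Corollary \ref{CorPrinc}, Theorem \ref{TNorm}, and Theorem \ref{ThPrEf}) exactly as the paper does. Your direct verification that $\mbox{Graph}(W_{Y_+})$ is convex --- as the linear image of $\{(x,z,y): (x,z)\in \mbox{epi}(G),\ (x,y)\in \mbox{epi}(F)\}$, an intersection of two convex preimages --- correctly supplies the one fact the paper asserts without proof, so nothing needs fixing.
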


\section*{Acknowledgement}
We thank the referee for him/her suggestions which have helped us to improve the overall aspect of the manuscript.

\section*{Funding}
The authors have been supported by project MTM2017-86182-P (AEI, Spain and ERDF/FEDER, EU). The author Fernando Garc\'i{}a-Casta\~no has also been supported by MINECO and FEDER (MTM2014-54182).


\end{document}